\newcommand{\dv}{\operatorname{div}}
\def\bbu{{\boldsymbol{u}}}
\def\bbx{{\boldsymbol{x}}}
\def\by{\mathbf{y}}
\def\R{\mathbb{R}}
\def\bcero{\mathbf 0}
\numberwithin{equation}{section}
\newtheorem{theorem}{Theorem}[section]
\newtheorem{corollary}[theorem]{Corollary}
\newtheorem{definition}{Definition}[section]
\newtheorem{lemma}[theorem]{Lemma} 
\newtheorem{proposition}[theorem]{Proposition}
\begin{document}
\title[Optimal control. Existence and relaxation]{Existence and relaxation for optimal control governed by steady, quasilinear PDEs}
\author{Pablo Pedregal$^\dagger$}
\date{} 
\thanks{INEI, U. de Castilla-La Mancha, 13071 Ciudad Real, SPAIN. Supported by grants 
PID2020-116207GB-I00, and  SBPLY/19/180501/000110. Data sharing not applicable to this article as no datasets were generated or analysed during the current study.}
\thanks{$\dagger$ Corresponding author: pablo.pedregal@uclm.es}
\begin{abstract}
We focus on optimal control problems governed by elliptic, quasilinear PDEs. Though there are various examples of such problems in the literature, we make an attempt at describing some general principles by dealing with three basic situations. In the first one, we assume that the state equation is variational; the second one focuses on a non-variational, monotone operator as state equation; finally, we add a non-linear term off the divergence part of the equation. In the first two cases, existence of optimal solutions can be established under suitable sets of assumptions, while relaxation is required for the third situation. Concerning the cost functional, and though more general examples can be dealt with, we will take a typical case consisting of two terms: one depending on the state, and another one of the form of a typical Thychonov regularization. Optimality, especially for the last situation, will be addressed in a forthcoming work. 
\end{abstract}
\maketitle
\section{Introduction}
We would like to address the basic issues of existence of optimal solutions and optimality conditions for optimal control problems under state equations of the general structure
\begin{equation}\label{estado}
-\dv[A(\bbx, y_u(\bbx), \nabla y_u(\bbx))]+a(\bbx, y_u(\bbx), \nabla y_u(\bbx))=f(u(\bbx))\hbox{ in }\Omega,
\end{equation}
and some appropriate boundary condition around $\partial\Omega$ for $y_u$ in such a way that the operation $u\mapsto y_u$ is well-defined. Here
$$
A(\bbx, y, \by):\Omega\times\R\times\R^N\to\R^N,\quad a(\bbx, y, \by):\Omega\times\R\times\R^N\to\R,
$$
are functions that are assumed continuous, or even smooth, in variables $(y, \by)$ and measurable in $\bbx$, while
$f(u):\R\to\R$ is a continuous function. Since our interest is not to examine results under the most general hypotheses on the domain $\Omega$, we will assume it to be as regular as we may need it to be. 
The variable $u$ represents the control while $y_u$ stands for its associated state. 
For the cost functional, we will put
$$
E(u)=\int_\Omega F(\bbx, y_u(\bbx), u(\bbx), \nabla y_u(\bbx), \nabla u(\bbx))\,d\bbx
$$
for an integrand
$$
F(\bbx, y, u, \by, \bbu): \Omega\times\R\times\R\times\R^N\times\R^N\to\R
$$
which will be assumed with the necessary hypotheses in each particular situation considered. 

One could envision even more general problems where the interaction of state and control both in the state equation and the cost functional runs at a deeper, more general level beyond the situations to be explored here. But such degree of generality leads to a far too general framework in which the structure of the state equation \eqref{estado} might be completely lost. Keep in mind the well-known case of optimal control problems where the control acts on the coefficients of a linear elliptic equation, and the fundamental field of homogenization (\cite{murattartar}). 

For the sake of definiteness, and to stress what our main contribution is, we will focus on state equations of the simplified form
\begin{equation}\label{estadosimp}
-\dv[A(\nabla y_u(\bbx))]+a(\nabla y_u(\bbx))=f(u(\bbx))\hbox{ in }\Omega,
\end{equation}
overlooking explicit dependence on pairs $(\bbx, y_u)$. These more general situations can easily be adapted.
To be specific, with the target in mind of learning how to deal with non-linearities, we will treat three situations of increasing complexity, as regards the state equation and its structure:
\begin{enumerate}
\item State equations of variational nature with $a\equiv0$.
\item Non-linear, monotone state equation with $a\equiv0$.
\item Quasilinear state equation: $a$, non-vanishing.
\end{enumerate}


In the first situation, we assume that the state equation \eqref{estadosimp} is variational, i.e. it is the optimality equation with respect to $y$ for a cost functional
$$
I(y, u)=\int_\Omega [W(\nabla y(\bbx))+f(u(\bbx))y(\bbx)]\,d\bbx
$$
where the integrands
$$
W(\by):\R^N\to\R,\quad f(u):\R\to\R,
$$
are uniformly of quadratic growth and convex, and continuous, respectively, in such a way that the state equation \eqref{estadosimp} becomes
$$
-\dv[\nabla_\by W(\nabla y_u(\bbx))]+f(u(\bbx)))=0\hbox{ in }\Omega,
$$
and
$$
c(|\by|^2-1)\le W(\by)\le C(|\by|^2+1),\quad 0<c<C.
$$
Compared to our general model problem \eqref{estadosimp}, we see that the coefficient $a\equiv0$ and the divergence part is variational.

The second case corresponds to a non-variational, non-linear, monotone equation. This time, for the state equation in \eqref{estadosimp} we have the crucial monotonicity property
\begin{equation}\label{monotonia1}
(A(\by_1)-A(\by_2))\cdot(\by_1-\by_2)\ge0;
\end{equation}
moreover, the condition
\begin{equation}\label{monotonia2}
(A(\by_1)-A(\by_2))\cdot(\by_1-\by_2)=0
\end{equation}
implies $\by_1=\by_2$. The map $A(\by):\R^N\to\R^N$ is assumed continuous and of linear growth
\begin{equation}\label{crecimiento}
|A(\by)|\le C(|\by|+1),\quad C>0.
\end{equation}
Coefficient $a$ still vanishes identically. 

In the final model, we will take a state equation of the form \eqref{estadosimp}, where this time the function $a$ is non-vanishing, $a(\bcero)=0$, and it is Lipschitz. In particular, we will have
$$
|a(\by)|\le C|\by|,\quad C>0.
$$
If the mapping $A$ is elliptic, then it is well-known that the corresponding state equation will have a unique solution for each feasible control function $u$, provided, for instance, that the right-hand side $f(u)$ in \eqref{estadosimp} is a uniformly bounded, continuous function. Since in this third situation, we will mainly be concerned with the effect of the non-linearity $a(\by)$, we will simplify the main part of the equation as much as possible, and will write $A(\by)=\by$, so that it becomes the Laplace operator, and our state equation will be
$$
-\Delta y_u+a(\nabla y_u)+by_u=f(u)\hbox{ in }\Omega,\quad y_u\in H^2(\Omega)\cap H^1_0(\Omega),
$$
for a scalar $b$ sufficiently large to ensure existence and uniqueness of the solution $y_u$. 

Concerning the cost functional, again for the sake of definiteness, we will always take
$$
E(u)=\int_\Omega F(y_u(\bbx))\,d\bbx+\frac M2\int_\Omega|\nabla u(\bbx)|^2\,d\bbx,
$$
where $F(y)$ is a continuous real function, that could also depend on $\bbx$, and $M>0$ is a constant. Eventually, one could also take 
$$
E(u)=\int_\Omega F(y_u(\bbx))\,d\bbx+\frac M2\int_\Omega|u(\bbx)|^2\,d\bbx,
$$
taking this time control variables $u$ in $L^2(\Omega)$ instead of in $H^1(\Omega)$. We will comment on how this change may affect our results.

We will be able to show existence of optimal solutions in the first two situations, when $a\equiv0$: Theorem \ref{existencia}, Corollary \ref{cor}, and Theorem \ref{existencia2}; 
however, when this coefficient $a$ is non-vanishing,  non-existence is expected, in general, and the corresponding optimization problem is in need of relaxation (Theorem \ref{relajaciont}). In addition to recalling some fundamental results from the theory of Young measures (Theorems \ref{basico} and \ref{importante}) and some corollaries adapted to our needs in this work (Corollaries \ref{modificar} and \ref{aqui}), our main analytical concept in this framework is that of measure-valued solution of a PDE. This is a quite common concept in the field of conservation laws or in the area of Navier-Stokes equations, but, to the best of our knowledge, have not received much attention outside those  fields. The literature on this topic is rapidly expanding though we will only cite the pioneering book \cite{necas}. Since Young measures is a common tool in the analysis of non-convex variational problems (\cite{balder}, \cite{muller}, \cite{pedregalbook}, \cite{rindler}, \cite{roubicek}, \cite{valadier}), it is not surprising that they play a central role for non-linear optimal control problems as well (\cite{roubicekbook}).  

\begin{definition}
A family of probability measures $\nu=\{\nu_\bbx\}_{\bbx\in\Omega}$, supported in $\R^N$ complying with the integrability condition
$$
\int_\Omega\int_{\R^N}|\lambda|^2\,d\nu_\bbx(\lambda)\,d\bbx<\infty,
$$
is a measure-valued solution of the PDE
$$
-\dv[A(\nabla y(\bbx))]+a(\nabla y(\bbx))+f(\bbx)=0\hbox{ in }\Omega,\quad y\in H^1_0(\Omega),
$$ 
with $A$, and $a$, continuous, and
$$
|A(\by)|\le C(|\by|+1),\quad |a(\by)|\le C|\by|,\quad C>0,
$$
if
$$
-\dv[\overline A(\bbx)]+\overline a(\bbx)+f(\bbx)=0\hbox{ in }\Omega, 
$$
where
\begin{gather}
\nabla y(\bbx)=\int_{\R^N}\lambda\,d\nu_\bbx(\lambda),\quad y\in H^2(\Omega)\cap H^1_0(\Omega),\nonumber\\ 
\overline A(\bbx)\equiv\int_{\R^N}A(\lambda)\,d\nu_\bbx(\lambda),\quad \overline a(\bbx)\equiv\int_{\R^N}a(\lambda)\,d\nu_\bbx(\lambda).\nonumber
\end{gather}
\end{definition}
Relaxation for the last situation examined will require this kind of generalized state equation with measure-valued solutions (Theorem \ref{relajaciont}). 

Optimal control problems for non-linear PDEs have been extensively studied, especially the semilinear case. The bibliography for this case is abundant covering a rich spectrum of issues and possibilities \cite{trolt}. See also \cite{roubicekbook}, \cite{roubicek} for a finer, more sophisticated treatment. 
The quasilinear case has also been addressed (\cite{casas4}, \cite{casas5}) even for dynamic problems \cite{casas2}, but for state equations having a linear structure, with coefficients possibly depending on the state, on the gradient of the state. The special case of the $p$-Laplace equation has received some attention too \cite{casas3}, \cite{varios}. 

Our methods can be applied, without essential changes, to more general situations, especially in the first two cases. We will treat and comment on some of them as we move on to deal with each of the situations indicated above.


\section{State equations with variational structure}\label{variacional}
Although, as indicated in the Introduction, more general situations can be treated, for the sake of simplicity we will focus on the following optimal control problem
\begin{equation}\label{optcontprob}
\hbox{Minimize in }u\in H^1(\Omega):\quad E(u)=\int_\Omega F(y_u(\bbx))\,d\bbx+\frac M2\int_\Omega|\nabla u(\bbx)|^2\,d\bbx
\end{equation}
subject to
\begin{gather}
I(y_u, u)=\min_{y\in H^1_0(\Omega)} I(y, u)\label{estadovar}\\
I(y, u)=\int_\Omega [W(\nabla y(\bbx), u(\bbx))+f(\bbx)y(\bbx)]\,d\bbx.\nonumber
\end{gather}
Assumptions on the various elements defining this problem are:
\begin{enumerate}
\item Integrand $F$ is a continuous, bounded real function. If $N>2$, it can only be assumed such that 
$$
-C\le F(y)\le C(|y|^{2N/(N-2)}+1),\quad C>0.
$$
\item Integrand $W(\by, u)$ is strictly convex in $\by$ for every $u$, and 
$$
c(|\by|^2-1)\le W(\by, u)\le C(|\by|^2+1),\quad 0<c<C.
$$
\item $M>0$ is a constant, and $f\in L^2(\Omega)$.
\end{enumerate}
Under these assumptions, it is elementary to check that there is a unique $y_u$, minimizer in \eqref{estadovar}, and the optimization problem is well-defined. 

\begin{theorem}\label{existencia}
Under the hypotheses just described, there is an optimal solution $u_0$ for our optimal control problem \eqref{optcontprob}. 
\end{theorem}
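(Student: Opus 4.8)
The plan is to use the direct method of the calculus of variations on the cost functional $E$, combined with the stability of the (variational) state equation under weak convergence. First I would take a minimizing sequence $\{u_k\}\subset H^1(\Omega)$ for $E$. Since $F$ is bounded below and the Tychonov term $\frac M2\int_\Omega|\nabla u_k|^2\,d\bbx$ controls the $H^1$-seminorm, the sequence $\{\nabla u_k\}$ is bounded in $L^2(\Omega;\R^N)$. To bound $\{u_k\}$ fully in $H^1(\Omega)$ one needs control on the $L^2$-norm of $u_k$ itself; if the problem is posed with a zero-mean normalization, a homogeneous trace condition, or some analogous constraint this is immediate via Poincaré, and otherwise a Poincaré–Wirtinger argument handles it — I would note this point explicitly. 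Extract a subsequence (not relabeled) with $u_k\rightharpoonup u_0$ weakly in $H^1(\Omega)$, hence strongly in $L^2(\Omega)$ and, after a further subsequence, pointwise a.e.

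Next I would establish that $y_{u_k}\to y_{u_0}$ in an appropriate sense. The key estimate comes from the coercivity $c(|\by|^2-1)\le W(\by,u)$: testing the minimality of $y_{u_k}$ against $y\equiv0$ gives $c\|\nabla y_{u_k}\|_{L^2}^2 \le C|\Omega| + \|f\|_{L^2}\|y_{u_k}\|_{L^2}$, and Poincaré's inequality on $H^1_0(\Omega)$ closes this into a uniform bound $\|y_{u_k}\|_{H^1_0}\le K$. So $y_{u_k}\rightharpoonup \hat y$ weakly in $H^1_0(\Omega)$ for a subsequence. The crucial claim is that $\hat y = y_{u_0}$, i.e. that $\hat y$ is the minimizer of $I(\cdot,u_0)$. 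This is where I expect the main obstacle to lie, and I would argue it as a $\Gamma$-convergence / lower semicontinuity statement: for any fixed competitor $z\in H^1_0(\Omega)$,
\[
I(\hat y, u_0)\le \liminf_k I(y_{u_k},u_k)\le \liminf_k I(z,u_k) = I(z,u_0).
\]
The first inequality uses weak lower semicontinuity of $y\mapsto\int_\Omega W(\nabla y,u)\,d\bbx$ (convexity of $W$ in $\by$ plus a standard continuity/Scorza–Dragoni argument to absorb the $u_k\to u_0$ a.e. convergence — this is the technically delicate step, since $W$ depends jointly on $\nabla y$ and $u$) together with $\int_\Omega f y_{u_k}\to\int_\Omega f\hat y$ by weak convergence in $H^1_0$; the last equality uses $W(\nabla z,u_k)\to W(\nabla z,u_0)$ a.e. with the quadratic upper bound and dominated convergence. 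By strict convexity the minimizer is unique, so $\hat y = y_{u_0}$, and in fact one upgrades to strong convergence $y_{u_k}\to y_{u_0}$ in $H^1_0(\Omega)$ by a standard argument comparing energies (using $W(\nabla y_{u_k},u_k)$ and the a.e. convergence to pass $\limsup$ inequalities), though weak convergence together with the Rellich compact embedding $H^1_0(\Omega)\hookrightarrow L^2(\Omega)$ — and into $L^{2N/(N-2)}(\Omega)$ when $N>2$ for the growth-restricted $F$ — already suffices for the cost.

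Finally I would pass to the limit in $E$. By the compact Sobolev embedding, $y_{u_k}\to y_{u_0}$ strongly in $L^q(\Omega)$ for every $q<2N/(N-2)$ (and in every $L^q$, $q<\infty$, if $N\le 2$), hence along a further subsequence pointwise a.e.; combined with continuity of $F$ and the growth bound $F(y)\le C(|y|^{2N/(N-2)}+1)$, uniform integrability (or a generalized dominated convergence using the $L^{2N/(N-2)}$ bound, with equi-integrability from the strict subcriticality of the convergence) gives $\int_\Omega F(y_{u_k})\,d\bbx\to\int_\Omega F(y_{u_0})\,d\bbx$. Meanwhile the weak lower semicontinuity of $u\mapsto\frac M2\int_\Omega|\nabla u|^2\,d\bbx$ under $u_k\rightharpoonup u_0$ in $H^1(\Omega)$ gives $\frac M2\int_\Omega|\nabla u_0|^2\le\liminf_k\frac M2\int_\Omega|\nabla u_k|^2$. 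Adding these,
\[
E(u_0)\le\liminf_k E(u_k)=\inf E,
\]
so $u_0$ is optimal. (If instead one works with $u\in L^2(\Omega)$ and the $\frac M2\int|u|^2$ penalty, the Tychonov term only gives weak $L^2$ compactness of $u_k$, which is still enough here because $E$ remains sequentially weakly lower semicontinuous in that topology and the state map $u\mapsto y_u$ must then be checked to be weak-$L^2$-to-strong-$H^1_0$ continuous; this is the sense in which the choice of control space affects the argument.)
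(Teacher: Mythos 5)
Your argument is essentially the paper's own proof: the direct method, identification of the limit state by passing to the limit in the minimality inequality $I(y_{u_k},u_k)\le I(z,u_k)$ using convexity of $W$ in $\by$ together with the strong $L^2$ convergence $u_k\to u_0$, uniqueness of the minimizer by strict convexity, and then strong $L^2$ convergence of the states plus weak lower semicontinuity of the Tychonov term to conclude optimality of $u_0$. The extra points you flag (the $L^2$-bound on $u_k$ itself and the technical handling of the joint dependence of $W$ on $(\nabla y,u)$) are treated only implicitly in the paper, but they do not amount to a different route.
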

The proof of this result follows that utilized in \cite{pedregalvarios} in a much more involved situation of non-linear systems of PDEs. It is however worthwhile to write it in this context, so as to bring attention to it. 
\begin{proof}
Let $\{u_j\}$ be a minimizing sequence for our problem, with corresponding states $\{y_j\}$. The bounds assumed on the various ingredients of our problem enables us to conclude that the first sequence is uniformly bounded in $H^1(\Omega)$ (the integrand $F$ is bounded from below by some constant), and so is the second. Consequently, there are, after taking suitable subsequence not relabeled, weak limits $u$ and $y$, respectively. The issue is if they are related through the minimization problem \eqref{estadovar}, i.e. if $y=y_u$ for the limit $u$. This is indeed so. 

Let $z$ be an arbitrary function in $H^1_0(\Omega)$. For each $j$, we should have
$$
I(y_j, u_j)\le I(z, u_j)
$$
that is
$$
\int_\Omega [W(\nabla y_j(\bbx), u_j(\bbx))+f(\bbx)y(\bbx)]\,d\bbx\le \int_\Omega [W(\nabla z(\bbx), u_j(\bbx))+f(\bbx)z(\bbx)]\,d\bbx,
$$
because $y_j$ is the unique minimizer corresponding to $u_j$. We would like to take limits in the previous inequality. To this end, note that $u_j\to u$ strongly in $L^2(\Omega)$, and that $W$ is convex in $\by$, for fixed $u$. Thus by weak lower semicontinuity
\begin{equation}\label{wls}
I(y, u)\le \lim_j I(y_j, u_j).
\end{equation}
Notice that no convexity of $W$ is necessary in the variable $u$ because the convergence $u_j\to u$ is strong, and not just weak. Hence
\begin{align}
I(y, u)\le &\lim_j \int_\Omega [W(\nabla z(\bbx), u_j(\bbx))+f(\bbx)z(\bbx)]\,d\bbx\nonumber\\
=&\int_\Omega [W(\nabla z(\bbx), u(\bbx))+f(\bbx)z(\bbx)]\,d\bbx,\label{igualdad}
\end{align}
again by the same strong convergence $u_j\to u$, and our bounds on $W$. The resulting inequality for arbitrary $z\in H^1_0(\Omega)$, implies that the limit $y$ is indeed $y_u$ for $u$ the limit of the initial minimizing sequence for the problem. 

Once we have this crucial information at our disposal, and realizing that, once again, $y_j\to y\equiv y_u$ in $L^2(\Omega)$ strongly, and that $E(u_j)\searrow m$, the value of the infimum of the control problem, by weak lower semicontinuity of $E$ with respect to $\{u_j\}$, 
\begin{align}
m\le &E(u)=\int_\Omega F(y_u(\bbx))\,d\bbx+\frac M2\int_\Omega|\nabla u(\bbx)|^2\,d\bbx\nonumber\\
\le&\lim_j \int_\Omega F(y_j(\bbx))\,d\bbx+\frac M2\int_\Omega|\nabla u_j(\bbx)|^2\,d\bbx\label{desigualdad}\\
=&\lim_j E(u_j)\nonumber\\
=&m.\nonumber
\end{align}
The limit pair $(y, u)$ is then feasible, and becomes an optimal solution. 
\end{proof}
The situation in which the cost functional does not depend, in an explicit, coercive form, in the gradient of the control, 
$$
E(u)=\int_\Omega F(y_u(\bbx), u(\bbx))\,d\bbx
$$
is drastically distinct, because we can no longer rely on the strong convergence of a minimizing sequence $\{u_j\}$, but only weak is guaranteed (under coercivity of $F$ with respect to $u$). The most demanding issue for the proof of Theorem \ref{existencia} to be valid in this other case, concerns equality \eqref{igualdad}. This forces, under weak convergence of $u_j\rightharpoonup u$, that the dependence of $W$ in $u$ be affine. In addition, inequality \eqref{wls} requires $W$ to be fully convex in all of its variables, and inequality \eqref{desigualdad} asks for a convex dependence of $F$ in $u$. 
We are talking about the problem
\begin{equation}\label{optcontprobesp}
\hbox{Minimize in }u\in H^1(\Omega):\quad E(u)=\int_\Omega F(y_u(\bbx), u(\bbx))\,d\bbx
\end{equation}
subject to
\begin{gather}
I(y_u, u)=\min_{y\in H^1_0(\Omega)} I(y, u)\nonumber\\
I(y, u)=\int_\Omega [W(\nabla y(\bbx))+ w(y(\bbx))u(\bbx)+f(\bbx)y(\bbx)]\,d\bbx.\nonumber
\end{gather}
Note that the state equation reads
$$
-\dv[\nabla_\by W(\nabla y_u(\bbx))]+w'(y_u(\bbx))u(\bbx)+f(\bbx)=0\hbox{ in }\Omega,
$$
in case $W$ and $w$ are differentiable, and suitable bounds are assumed on such derivatives. 

The existence result requires 
assumptions to be changed to:
\begin{enumerate}
\item Integrand $F(y, u)$ is a continuous, real function, convex in $u$ for each fixed $y$ such that 
$$
C(|u|^2-1)\le F(y, u),\quad C>0.
$$
\item Integrand $W(\by)$ is strictly convex, and 
$$
c(|\by|^2-1)\le W(\by)\le C(|\by|^2+1),\quad 0<c<C.
$$
\item $f\in L^2(\Omega)$, and the real function $w(y)$ is continuous. 
\end{enumerate}

\begin{corollary}\label{cor}
Under this new set of assumptions, problem \eqref{optcontprobesp} admits optimal solutions. 
\end{corollary}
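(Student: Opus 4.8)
The plan is to re-run the argument of Theorem~\ref{existencia}, keeping close watch on the three places where that proof used the \emph{strong} convergence of the controls: these are exactly the points at which the affine dependence of the coupling term $w(y)u$ on $u$, the (genuine) convexity of $W$, and the convexity of $F$ in $u$ have to be brought in. Let $\{u_j\}$ be a minimizing sequence. By the coercivity hypothesis $C(|u|^2-1)\le F(y,u)$ it is bounded in $L^2(\Omega)$ — the natural space for the control here, since no gradient term appears in the cost, cf.\ the remark in the Introduction — so after passing to a subsequence $u_j\rightharpoonup u$ weakly in $L^2(\Omega)$; write $y_j=y_{u_j}$ for the associated states.

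First I would check that $\{y_j\}$ is bounded in $H^1_0(\Omega)$. Testing the state minimization in \eqref{optcontprobesp} against $z\equiv0$ gives $I(y_j,u_j)\le I(0,u_j)=W(\bcero)|\Omega|+w(0)\int_\Omega u_j\,d\bbx$, whose right-hand side stays bounded because $\|u_j\|_{L^2}$ does. On the other hand, using $W(\by)\ge c(|\by|^2-1)$, Poincar\'e's inequality on $H^1_0(\Omega)$, the $L^2$-bound on $u_j$, $f\in L^2(\Omega)$, and the growth restriction $|w(y)|\le C(|y|+1)$ that is in any case needed for \eqref{optcontprobesp} to be well posed, one gets $I(y_j,u_j)\ge c\|\nabla y_j\|_{L^2}^2-C_1\|\nabla y_j\|_{L^2}-C_2$; combined with the previous bound this is a quadratic inequality in $\|\nabla y_j\|_{L^2}$ that keeps it bounded. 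Hence, along a further subsequence, $y_j\rightharpoonup y$ in $H^1_0(\Omega)$ and, by Rellich--Kondrachov, $y_j\to y$ strongly in $L^2(\Omega)$ and pointwise a.e.

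The crux is to show $y=y_u$. For arbitrary $z\in H^1_0(\Omega)$, minimality of $y_j$ gives
$$
\int_\Omega W(\nabla y_j)\,d\bbx+\int_\Omega w(y_j)u_j\,d\bbx+\int_\Omega fy_j\,d\bbx\le\int_\Omega W(\nabla z)\,d\bbx+\int_\Omega w(z)u_j\,d\bbx+\int_\Omega fz\,d\bbx .
$$
On the right, $w(z)\in L^2(\Omega)$ and $u_j\rightharpoonup u$ in $L^2(\Omega)$, so the right-hand side tends to $I(z,u)$. On the left I pass to the $\liminf$ term by term: $\liminf_j\int_\Omega W(\nabla y_j)\,d\bbx\ge\int_\Omega W(\nabla y)\,d\bbx$ by weak lower semicontinuity of the convex functional $v\mapsto\int_\Omega W(\nabla v)\,d\bbx$; $\int_\Omega fy_j\,d\bbx\to\int_\Omega fy\,d\bbx$; and for the coupling term, the a.e.\ convergence $y_j\to y$, the growth of $w$, and the uniform integrability supplied by the strong $L^2$ convergence give $w(y_j)\to w(y)$ strongly in $L^2(\Omega)$, whence $\int_\Omega w(y_j)u_j\,d\bbx\to\int_\Omega w(y)u\,d\bbx$ (split it as $\int_\Omega(w(y_j)-w(y))u_j\,d\bbx+\int_\Omega w(y)(u_j-u)\,d\bbx$; the first integral is $O(\|w(y_j)-w(y)\|_{L^2})\to0$ since $\|u_j\|_{L^2}$ is bounded, the second tends to $0$ by weak convergence). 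Thus $I(y,u)\le I(z,u)$ for every $z\in H^1_0(\Omega)$, and strict convexity of $W$ makes the minimizer unique, so $y=y_u$ and the pair $(y,u)$ is feasible.

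To conclude, since $y_j\to y_u$ strongly in $L^2(\Omega)$, hence in measure, and $u_j\rightharpoonup u$ weakly in $L^2(\Omega)$, while $F$ is continuous, bounded below, and convex in $u$ for each fixed $y$, a standard weak lower semicontinuity theorem for integral functionals — convex in the weakly convergent variable, continuous in the strongly convergent one — gives $\liminf_j\int_\Omega F(y_j,u_j)\,d\bbx\ge\int_\Omega F(y_u,u)\,d\bbx=E(u)$. As $E(u_j)\searrow m$, the value of the infimum, this forces $E(u)\le m$, so $u$ is optimal. The main obstacle is the identification step, namely handling $\int_\Omega w(y_j)u_j\,d\bbx$ — and, at the end, $\int_\Omega F(y_j,u_j)\,d\bbx$ — when $u_j$ converges only weakly: it is precisely here that the \emph{affine} dependence of the coupling on $u$ and the convexity of $F$ in $u$ are indispensable, and where a growth bound on $w$ (implicit in the well-posedness of \eqref{optcontprobesp}) must be used to turn $y_j\to y$ into strong $L^2$ convergence of $w(y_j)$.
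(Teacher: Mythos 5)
Your proposal is correct and takes essentially the same route as the paper: the paper's proof of Corollary \ref{cor} simply reruns the direct-method argument of Theorem \ref{existencia} with only weak $L^2$ convergence of the controls, checking that \eqref{wls}, \eqref{igualdad} and \eqref{desigualdad} survive --- which is exactly what you do by pairing the strongly convergent factors $w(y_j)$, $y_j$ with the weakly convergent $u_j$ and invoking convexity of $F$ in $u$ for the final lower semicontinuity. Your additions (reading the control space as $L^2(\Omega)$, as suggested in the paper's introduction, and the linear growth bound on $w$ needed for well-posedness of the coupling term) are fill-ins of points the paper leaves implicit rather than a different argument.
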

\begin{proof}
The proof follows exactly along the lines of that of Theorem \ref{existencia}. We have enforced hypotheses to ensure that inequality \eqref{wls}, equality \eqref{igualdad}, and inequality \eqref{desigualdad} still hold. Hence the outcome is similar: the existence of optimal solutions for problem \eqref{optcontprobesp}. 
\end{proof}

\section{Monotone state equations}
We now deal with the second situation in which the state equation does not have variational structure. More specifically our problem is
\begin{equation}\label{probnonvar}
\hbox{Minimize in }u\in H^1(\Omega):\quad \int_\Omega F(y_u(\bbx))\,d\bbx+\frac M2\int_\Omega|\nabla u(\bbx)|^2\,d\bbx
\end{equation}
under
\begin{equation}\label{estado2}
-\dv[A(\nabla y_u(\bbx))]=f(\bbx, u(\bbx))\hbox{ in }\Omega,\quad y_u\in H^1_0(\Omega),
\end{equation}
for a positive constant $M$, and function 
\begin{equation}\label{rhs}
f(\bbx, u):\Omega\times\R\to\R,\quad |f(\bbx, u)|\le C(|u|+1).
\end{equation}
The function $F$ is assumed non-negative and continuous. 
Concerning the central map $A(\by)$, we will assume that it is strictly monotone as expressed in \eqref{monotonia1}-\eqref{monotonia2}, more specifically
\begin{equation}\label{monotonia}
(A(\by_1)-A(\by_2))\cdot(\by_1-\by_2)\ge c|\by_1-\by_2|^2,\quad c>0,
\end{equation}
and complies with
\begin{equation}\label{monotonia3}
|A(\by)|\le C(|\by|+1),\quad A(\by)\cdot\by\ge c(|\by|^2-1),\quad C>c>0.
\end{equation}
Under these circumstances, for a feasible control $u$, there is a unique state $y_u$ solution of \eqref{estado2} (check for instance \cite{evans}). The proof of Theorem \ref{existencia} can no longer be adapted to this situation. 

An interesting family of examples of this kind is that of a perturbed variants of linear elliptic equations of the form
$$
-\dv[a\nabla y(\bbx)+g(\nabla y(\bbx))]=f(\bbx, u(\bbx))\hbox{ in }\Omega,
$$
where the coefficient $a$ could depend on $\bbx$, and
$$
a(\bbx)\ge a_0>0,
$$
while the mapping $g:\R^N\to\R^N$ is globally Lipschitz with a Lipschitz constant $L$ such that $a_0-L>0$. Without loss of generality, we can assume $g(\bcero)=\bcero$, and hence
$$
|g(\by)|\le L|\by|.
$$
Under these circumstances, the map
$$
A(\by)=a\by+g(\by)
$$
turns out to be strictly monotone \eqref{monotonia}-\eqref{monotonia3}. This is elementary to check. 

To learn about difficulties we face better, suppose $\{u_j\}$ is a bounded, minimizing sequence in $H^1(\Omega)$. It converges weakly in $H^1(\Omega)$ and strongly in $L^2(\Omega)$, after identifying a suitable non-relabeled subsequence, to some $u$. Let $\{y_j\}$ be the associated sequence of states so that
\begin{equation}\label{ecuacion}
-\dv[A(\nabla y_j)]=f(\bbx, u_j)\hbox{ in }\Omega,\quad y_j\in H^1_0(\Omega).
\end{equation}
If we use $y_j$ itself as a test function, and bearing in mind \eqref{monotonia3}, we conclude, in a standard way, that $\{y_j\}$ is uniformly bounded in $H^1_0(\Omega)$, and, as such, after a suitable non-relabeled subsequence, it converges to some $y\in H^1_0(\Omega)$. As in the previous situation, the crucial fact is to decide if, indeed, $u$ and $y$ are related to each other through the state equation, i.e. $y\equiv y_u$
\begin{equation}\label{limiteestado}
-\dv[A(\nabla y)]=f(\bbx, u)\hbox{ in }\Omega.
\end{equation}
This time however, we cannot resort to the same arguments as in Section \ref{variacional} as we are dealing with a non-variational PDE. Yet, because, the control variable $u$ does not occur in the main part of the equation, the usual arguments based on monotonicity still can be used and applied. This set of ideas is identified as the method of Browder and Minty (\cite{evans}). 

\begin{theorem}\label{existencia2}
Optimization problem \eqref{probnonvar} under state equation \eqref{estado2} admits optimal solutions provided hypotheses \eqref{rhs}-\eqref{monotonia3} hold.
\end{theorem}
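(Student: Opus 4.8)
### Proof proposal

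The plan is to follow the structure of the proof of Theorem \ref{existencia}: take a minimizing sequence, extract weak limits for the controls and the states, verify the limiting pair is feasible (i.e.\ $y \equiv y_u$), and then close the argument by lower semicontinuity of the cost. The first part is already laid out in the discussion preceding the statement: a minimizing sequence $\{u_j\}$ is bounded in $H^1(\Omega)$ (since $F \ge 0$ and the Tychonov term is coercive), hence $u_j \rightharpoonup u$ in $H^1(\Omega)$ and $u_j \to u$ strongly in $L^2(\Omega)$ along a subsequence; by \eqref{rhs} the right-hand sides $f(\bbx, u_j)$ are bounded in $L^2(\Omega)$, and testing \eqref{ecuacion} with $y_j$ and using the coercivity in \eqref{monotonia3} gives a uniform $H^1_0$ bound on $\{y_j\}$, so $y_j \rightharpoonup y$ in $H^1_0(\Omega)$ and $y_j \to y$ strongly in $L^2(\Omega)$ along a further subsequence.

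The heart of the matter is to show that $y = y_u$, i.e.\ that \eqref{limiteestado} holds, and this is where the Browder--Minty monotonicity trick enters. From \eqref{crecimiento}/\eqref{monotonia3} the sequence $A(\nabla y_j)$ is bounded in $L^2(\Omega;\R^N)$, so along a subsequence $A(\nabla y_j) \rightharpoonup \bm{\chi}$ weakly in $L^2(\Omega;\R^N)$, and passing to the limit in \eqref{ecuacion} gives $-\dv \bm{\chi} = f(\bbx, u)$ in $\Omega$, using that $f(\bbx, u_j) \to f(\bbx, u)$ strongly in $L^2$ (continuity of $f$ in $u$ plus the growth bound \eqref{rhs} and the strong $L^2$ convergence of $u_j$, via a dominated-convergence / Krasnoselskii-type argument). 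It remains to identify $\bm{\chi} = A(\nabla y)$. For this, test \eqref{ecuacion} with $y_j$ to get $\int_\Omega A(\nabla y_j)\cdot\nabla y_j \, d\bbx = \int_\Omega f(\bbx, u_j) y_j \, d\bbx \to \int_\Omega f(\bbx, u) y \, d\bbx = \int_\Omega \bm{\chi}\cdot\nabla y \, d\bbx$; then for arbitrary $w \in H^1_0(\Omega)$, monotonicity \eqref{monotonia} gives
\begin{equation}\label{bm}
0 \le \int_\Omega \bigl(A(\nabla y_j) - A(\nabla w)\bigr)\cdot\bigl(\nabla y_j - \nabla w\bigr)\, d\bbx,
\end{equation}
and letting $j \to \infty$ (each term converging, using the just-established convergence of $\int A(\nabla y_j)\cdot\nabla y_j$ and the weak limits) yields $0 \le \int_\Omega (\bm{\chi} - A(\nabla w))\cdot(\nabla y - \nabla w)\, d\bbx$. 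Choosing $w = y - t\varphi$ for $\varphi \in H^1_0(\Omega)$ and $t > 0$, dividing by $t$, and sending $t \to 0^+$ with the continuity of $A$ and dominated convergence gives $\int_\Omega (\bm{\chi} - A(\nabla y))\cdot\nabla\varphi\, d\bbx \ge 0$ for all $\varphi$, hence equality, so $-\dv[A(\nabla y)] = f(\bbx, u)$ and $y = y_u$ by uniqueness.

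With feasibility of $(y, u)$ secured, the cost estimate is immediate exactly as in \eqref{desigualdad}: $y_j \to y_u$ strongly in $L^2(\Omega)$ and $F$ is continuous and bounded below (non-negative), so $\int_\Omega F(y_j)\,d\bbx \to \int_\Omega F(y_u)\,d\bbx$ (dominated convergence after passing to an a.e.-convergent subsequence; if $F$ is merely non-negative and continuous one uses Fatou for the inequality, which suffices), while $\int_\Omega |\nabla u|^2 \le \liminf_j \int_\Omega |\nabla u_j|^2$ by weak lower semicontinuity of the norm. Hence $E(u) \le \liminf_j E(u_j) = m$, the infimum, so $u$ is optimal. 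The main obstacle is the identification step $\bm{\chi} = A(\nabla y)$: it relies on the delicate point that the control $u$ does not appear in the principal part of the operator, so that the energy identity $\int A(\nabla y_j)\cdot\nabla y_j \to \int \bm{\chi}\cdot\nabla y$ goes through cleanly — this is precisely what would fail if $A$ depended on $u$, and it is where strict monotonicity \eqref{monotonia} is essential; strict monotonicity (rather than plain monotonicity) additionally guarantees the uniqueness of $y_u$ that closes the argument.
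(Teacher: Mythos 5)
Your proposal is correct and follows essentially the same route as the paper: a uniformly bounded minimizing sequence, the uniform $H^1_0$ bound on the states via \eqref{monotonia3}, the weak limit $A(\nabla y_j)\rightharpoonup\overline A$, the energy identity obtained by testing \eqref{ecuacion} with $y_j$ together with the strong convergence of $f(\bbx,u_j)y_j$, and then the Browder--Minty perturbation argument to identify $\overline A=A(\nabla y)$, closing with weak lower semicontinuity of the cost. The only cosmetic difference is your explicit Fatou/dominated-convergence remark for the term $\int_\Omega F(y_j)\,d\bbx$, which the paper leaves implicit by referring back to the proof of Theorem \ref{existencia}.
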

\begin{proof}
We start from the non-negativeness condition
$$
\int_\Omega(A(\nabla y_j)-A(\nabla v))\cdot(\nabla y_j-\nabla v)\,d\bbx\ge0,
$$
which is valid for an arbitrary $v\in H^1_0(\Omega)$. For the first term
$$
\int_\Omega A(\nabla y_j)\cdot\nabla y_j\,d\bbx,
$$
we know, because of \eqref{ecuacion}, that it is equal to
$$
\int_\Omega f(\bbx, u_j(\bbx))y_j(\bbx)\,d\bbx,
$$
and hence
$$
\int_\Omega[f(\bbx, u_j)y_j-A(\nabla y_j)\nabla v-A(\nabla v)\cdot(\nabla y_j-\nabla v)]\,d\bbx\ge0.
$$
We would like to take limits in $j$ in this inequality. This forces us to write 
$$
A(\nabla y_j)\rightharpoonup\overline A\hbox{ in }L^2(\Omega; \R^N)
$$ 
for some function $\overline A$ which, in general, is not $A(\nabla y)$. This is in fact the heart of the difficulty. We will come back to this crucial issue in the next section where we will be most interested in understanding a way to tame these non-linear weak limits. Because the product $f(\bbx, u_j)y_j$ converges strongly to $f(\bbx, u)y$ since both factors do, we have
$$
\int_\Omega[f(\bbx, u)y-\overline A\cdot\nabla v-A(\nabla v)\cdot(\nabla y-\nabla v)]\,d\bbx\ge0.
$$
On the other hand, taking limits in \eqref{ecuacion} leads to
\begin{equation}\label{limite}
\int_\Omega \overline A\cdot\nabla w\,d\bbx=\int_\Omega f(\bbx, u) w\,d\bbx
\end{equation}
for every $w\in H^1_0(\Omega)$. In particular, using this identity for $w=y$, we can also write our resulting inequality in the form
$$
\int_\Omega(\overline A-A(\nabla v)\cdot(\nabla y-\nabla v)\,d\bbx\ge0,
$$
for all $v\in H^1_0(\Omega)$. If we now take $y-v=\lambda w$ for a positive scalar $\lambda$, 
$$
\int_\Omega(\overline A-A(\nabla y-\lambda\nabla w))\cdot\nabla w\,d\bbx\ge0.
$$
If we take $\lambda\to0$, we find
$$
\int_\Omega(\overline A-A(\nabla y))\cdot\nabla w\,d\bbx\ge0.
$$
The arbitrariness of $w\in H^1_0(\Omega)$ implies that, in fact, 
$$
\int_\Omega(\overline A-A(\nabla y))\cdot\nabla w\,d\bbx=0,
$$
and, then, recalling \eqref{limite}, we conclude
$$
\int_\Omega A(\nabla y)\cdot\nabla w\,d\bbx=\int_\Omega f(\bbx, u)w\,d\bbx,
$$
for every $w\in H^1_0(\Omega)$. This shows that \eqref{limiteestado} is correct. 

The rest of the proof proceeds in a standard way just like that of Theorem \ref{existencia}. Note that we could allow a more general integrand $F$ depending in a convex way in $\nabla y_u$. 
\end{proof}

The more general problem that is the result of changing the term
$$
\frac M2\int_\Omega|\nabla u(\bbx)|^2\,d\bbx
$$
by 
$$
\frac M2\int_\Omega|u(\bbx)|^2\,d\bbx
$$
in the functional cost has some important consequences concerning existence. As a matter of fact, the convergence $u_j$ to some feasible $u$ is no longer strong, and one has to be contented with only weak convergence. In such a situation, the limit of the right-hand side $f(\bbx, u_j)$ is no longer $f(\bbx, u)$, not even in a weak sense. In general, there is no way to recover the existence of optimal solutions, unless $f(\bbx, u)$ is linear in $u$. The proof of Theorem \ref{existencia2} is still valid but the limit of $f(\bbx, u_j)$ needs to be  conveniently identified. This can only be done through the Young measure $\nu=\{\nu_\bbx\}_{\bbx\in\Omega}$ associated with (a suitable subsequence of) a minimizing sequence $\{u_j\}$. There is a reach tradition of the use of Young measures in optimal control problems, precisely to furnish tools to deal with those problems for which non-existence, as a result of some non-convexity, is a fact (\cite{youngp}, \cite{young}). These analytical tools have been used to provide descriptions of relaxed versions of optimal control problems without solutions in a variety of scenarios. Check \cite{roubicek} for an account of a more recent, quite sophisticated treatment of these issues. 

Since the use of Young measures associated with sequences of feasible controls, at least as existence and relaxation is concerned, is by now well understood, we will not insist on this point in this section. Rather, we would like to see how Young measures corresponding to sequences of associated states $\{y_j\equiv y_{u_j}\}$ can be utilized to deal with main non-linearities in the state equation.

\section{Non-linear state equations}
In the final step, we would like to go from a state equation
$$
-\dv[A(\nabla y(\bbx))]=f(\bbx, u(\bbx))\hbox{ in }\Omega
$$
to 
$$
-\dv[A(\nabla y(\bbx))]+a(\nabla y)+by=f(\bbx, u(\bbx))\hbox{ in }\Omega.
$$
Note how the presence of the non-divergence term $a(\nabla y)$ spoils, in general, the strategy we have used in the previous two sections. As a matter of fact, the nature of optimal control problems under such state equations is rather different as we will shortly see. Since, on the other hand, the role played by the monotone, divergence term $A(\nabla y)$ is similar to what we have seen in the last section, for the sake of simplicity, we will only examine the case 
$$
A(\nabla y)=\nabla y, \quad -\dv[A(\nabla y)]=-\Delta y,
$$
so that our state equation will be taken to be
$$
-\Delta y+a(\nabla y)+by=f(u)\hbox{ in }\Omega,
$$
where
$$
|a(\by)|\le C(|\by|+1),\quad |f(u)|\le C(|u|+1),
$$
for some constant $C>0$. Both $a$ and $f$ are assumed to be continuous. The final term $by$ is there to ensure the existence of a solution of the state equation. If, in addition, $a$ is Lipschitz, the solution is unique.
\begin{lemma}\label{exist}
Suppose $a:\R^N\to\R$ is continuous and 
$$
|a(\by)|\le C(|\by|+1),\quad C>0.
$$
If $b>0$ is sufficiently large, then there are solutions of 
\begin{equation}\label{ecuestado}
-\Delta y+a(\nabla y)+by=F\hbox{ in }\Omega, \quad y\in H^2(\Omega)\cap H^1_0(\Omega),
\end{equation}
for $F\in L^2(\Omega)$. If, in addition, $a$ is Lipschitz, 
the solution is unique (for $b$ even larger if necessary). 
\end{lemma}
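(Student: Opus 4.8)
The plan is to set up a fixed-point argument for existence and a contraction/monotonicity argument for uniqueness. For existence, first fix $z \in H^1_0(\Omega)$ and consider the linear problem
$$
-\Delta y + by = F - a(\nabla z) \hbox{ in }\Omega,\quad y\in H^2(\Omega)\cap H^1_0(\Omega).
$$
Since $|a(\nabla z)|\le C(|\nabla z|+1)$ with $\nabla z\in L^2$, the right-hand side lies in $L^2(\Omega)$, and by standard elliptic regularity for $-\Delta + b$ on a smooth domain there is a unique such $y$; call it $y = \Phi(z)$. This defines a map $\Phi: H^1_0(\Omega)\to H^2(\Omega)\cap H^1_0(\Omega)$. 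A fixed point of $\Phi$ is exactly a solution of \eqref{ecuestado}. The key estimate is obtained by testing the linear equation with $y$ itself: using $\int_\Omega|\nabla y|^2 + b\int_\Omega|y|^2 = \int_\Omega (F - a(\nabla z))y$ together with the linear growth of $a$ and Young's inequality, one gets a bound
$$
\|y\|_{H^1_0}^2 \le \frac{C'}{b}\big(\|F\|_{L^2}^2 + \|\nabla z\|_{L^2}^2 + 1\big),
$$
so that for $b$ large enough $\Phi$ maps a sufficiently large ball $B_R \subset H^1_0(\Omega)$ into itself.

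Next I would upgrade this to compactness. Composing with the $H^2$ regularity estimate $\|\Phi(z)\|_{H^2}\le C(\|F\|_{L^2} + \|\nabla z\|_{L^2} + 1)$, the image $\Phi(B_R)$ is bounded in $H^2(\Omega)$, hence precompact in $H^1_0(\Omega)$ by Rellich. Continuity of $\Phi$ on $B_R$ follows from continuity of $a$: if $z_k\to z$ in $H^1_0$, then $\nabla z_k\to\nabla z$ in $L^2$, so along a subsequence $\nabla z_k\to\nabla z$ a.e.; the growth bound on $a$ gives $a(\nabla z_k)\to a(\nabla z)$ in $L^2$ by dominated convergence (the dominating function comes from the strong $L^2$ convergence, after passing to a further subsequence with an $L^2$ majorant), and then the linear elliptic estimate on the difference gives $\Phi(z_k)\to\Phi(z)$; since the limit is subsequence-independent, the full sequence converges. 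Schauder's fixed point theorem then yields a fixed point $y\in B_R$, which is the desired solution in $H^2(\Omega)\cap H^1_0(\Omega)$.

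For uniqueness under the additional Lipschitz hypothesis $|a(\by_1)-a(\by_2)|\le L|\by_1-\by_2|$, suppose $y_1,y_2$ are two solutions and set $w = y_1 - y_2 \in H^2\cap H^1_0$. Subtracting the equations and testing with $w$,
$$
\int_\Omega|\nabla w|^2 + b\int_\Omega|w|^2 = -\int_\Omega\big(a(\nabla y_1)-a(\nabla y_2)\big)w \le L\int_\Omega|\nabla w|\,|w| \le \tfrac12\int_\Omega|\nabla w|^2 + \tfrac{L^2}{2}\int_\Omega|w|^2,
$$
so $\tfrac12\int_\Omega|\nabla w|^2 + (b - \tfrac{L^2}{2})\int_\Omega|w|^2 \le 0$; choosing $b > L^2/2$ forces $w = 0$. (Equivalently, one shows $\Phi$ is a contraction on $H^1_0$ for $b$ large, recovering both existence and uniqueness at once in the Lipschitz case.)

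The main obstacle is the continuity of the fixed-point map $\Phi$ — more precisely, passing from a.e. convergence of $\nabla z_k$ to $L^2$-convergence of $a(\nabla z_k)$. This needs a generalized dominated convergence argument along a subsequence (extracting an $L^2$ majorant from the strongly convergent sequence $\nabla z_k$) and then the standard subsequence-limit-uniqueness trick to conclude convergence of the full sequence; the growth condition $|a(\by)|\le C(|\by|+1)$ is exactly what makes this work. Everything else — the a priori bound, the $H^2$ estimate, Rellich compactness, Schauder — is routine.
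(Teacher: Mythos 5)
Your proof is correct, and it follows the same overall strategy as the paper: a fixed-point argument for existence and an energy (testing with $y_1-y_2$, Lipschitz bound plus Young's inequality, $b$ large) argument for uniqueness; your uniqueness computation is essentially identical to the paper's, up to the precise threshold on $b$ ($b>L^2/2$ versus the paper's $b>L^2/4$ after optimizing the Young parameter). The only real difference is in the existence half: the paper does not write out the argument but simply invokes Schaefer's Fixed Point Theorem via Evans, Section 9.2.2, where the required bound comes from a priori estimates on solutions of $y=\lambda\Phi[y]$, $\lambda\in[0,1]$; you instead run Schauder's theorem on an invariant ball $B_R\subset H^1_0(\Omega)$, which works because the hypothesis that $b$ be large lets you absorb the linear growth of $a$ and make $\Phi$ map $B_R$ into itself. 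Your route is self-contained and slightly more elementary (no homotopy-type a priori estimate needed), at the price of spelling out the compactness ($H^2$ regularity plus Rellich) and the continuity of $z\mapsto a(\nabla z)$ in $L^2$ via the subsequence/dominated-convergence trick — a step you correctly identify as the only delicate point and handle properly.
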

\begin{proof}
The existence for this kind of quasilinear equations, for $b$ sufficiently large, is established through fixed-point techniques as an application of Schaefer's Fixed Point Theorem. Check \cite{evans}, Section 9.2.2, or \cite{gilbarg} for more elaborate situations. 

The uniqueness part is elementary. If $y_i$, $i=1, 2$, are two solutions, multiplying the difference of the equations corresponding to $y_1$ and $y_2$, by the difference $y_1-y_2$, and performing a standard integration by parts, we find
\begin{equation}\label{ecuaciondebil}
\int_\Omega [|\nabla y_1-\nabla y_2|^2+(a(\nabla y_1)-a(\nabla y_2))(y_1-y_2)+b(y_1-y_2)^2]\,d\bbx=0.
\end{equation}
If $L$ is the Lipschitz constant of the function $a$, then 
$$
|(a(\nabla y_1)-a(\nabla y_2))(y_1-y_2)|\le L|\nabla y_1-\nabla y_2|\,|y_1-y_2|,
$$
and for $\epsilon>0$, we can write
$$
|(a(\nabla y_1)-a(\nabla y_2))(y_1-y_2)|\le \frac L2 \epsilon^2|\nabla y_1-\nabla y_2|^2+\frac L2\frac1{\epsilon^2}|y_1-y_2|^2.
$$
If we take this inequality back to identity \eqref{ecuaciondebil}, we have
$$
\int_\Omega\left[\left(1-\frac L2\epsilon^2\right)|\nabla y_1-\nabla y_2|^2+\left(b-\frac L2\frac1{\epsilon^2}\right)|y_1-y_2|^2\right]\,d\bbx\le0.
$$
If we take set $\epsilon^2=2/L$, and take $b$ sufficiently large, then
$$
0\le \left(b-\frac{L^2}4\right)\int_\Omega(y_1-y_2)^2\,d\bbx\le0
$$
implies $y_1\equiv y_2$. 
\end{proof}

Once we have clarified the nature of the state equation \eqref{ecuestado}, and the conditions under which it admits a unique solution, we focus on the optimal control problem
\begin{equation}\label{nolineal}
\hbox{Minimize in }u\in H^1(\Omega):\quad E(u)=\int_\Omega F(y_u(\bbx))\,d\bbx+\frac M2\int_\Omega|\nabla u(\bbx)|^2\,d\bbx
\end{equation}
under
\begin{equation}\label{estadonolineal}
-\Delta y_u+a(\nabla y_u)+by_u=f(u)\hbox{ in }\Omega,\quad y_u\in H^2(\Omega)\cap H^1_0(\Omega),
\end{equation}
for positive constant $M$, continuous, uniformly-bounded function $f$, and Lipschitz $a$. Without loss of generality, we can assume that $a(\bcero)=0$, because an arbitrary constant can be incorporated in the function $f$. In this way, we can assume that
\begin{equation}\label{lips}
|a(\by)|\le C|\by|,\quad C>0.
\end{equation}
The function $f$ has been assumed uniformly bounded to avoid some technical difficulties. 
The parameter $b$ is chosen sufficiently large depending on $a$, $f$, and $D$, to ensure existence and uniqueness of the associated state $y_u$ for each feasible $u$, according to Lemma \ref{exist}. We suppose that these ingredients are fixed once and for all so that the optimal control problem is well-defined. 

To realize the difficulties to show existence of optimal solutions in this more complicated framework, as compared to the other situations treated earlier, suppose $\{u_j\}$ is a minimizing sequence, which is bounded in $H^1(\Omega)$, and converging to some feasible $u$ strong in $L^2(\Omega)$, weak in $H^1(\Omega)$, and pointwise. Let $\{y_j\}$ be the sequence of corresponding states
\begin{equation}\label{estadosj}
-\Delta y_j+a(\nabla y_j)+by_j=f(u_j)\hbox{ in }\Omega,\quad y_j\in H^2(\Omega)\cap H^1_0(\Omega).
\end{equation}
Similar calculations as the ones utilized in the uniqueness part of Lemma \ref{exist} lead us to conclude that $\{y_j\}$ is uniformly bounded in $H^1(\Omega)$, and hence it converges to some $y$ weakly in this space. As in previous situations, the whole issue is to ensure that this limit $y$ is indeed the associated state corresponding to the above limit $u$. The presence of the non-linear term $a(\nabla y_j)$ outside the divergence part of the equation 
makes the success of the previous convexity/monotonicity arguments impossible so that there is, apparently, no way to prove that
$$
-\Delta y+a(\nabla y)+by=f(u)\hbox{ in }\Omega.
$$
If we multiply \eqref{estadosj} by a test function $z\in H^1_0(\Omega)$, and integrate by parts in the first term, we see that
$$
\int_\Omega[\nabla y_j\cdot\nabla z+a(\nabla y_j)z+by_jz]\,d\bbx=\int_\Omega f(u_j)z\,d\bbx.
$$
When taking limits in $j$, we can identify the limit for all the terms except for the product $a(\nabla y_j)z$. Indeed, if $a(\nabla y_j)\rightharpoonup\overline a$ weakly in $L^2(\Omega)$, then
$$
\int_\Omega[\nabla y\cdot\nabla z+\overline a z+byz]\,d\bbx=\int_\Omega f(u)z\,d\bbx.
$$
In general, $\overline a$ is not $a(\nabla y)$, as we well know. There is no way to resolve the situation, except by the use of some tool that permits to express and manipulate the weak limits of non-linear quantities of the form $\{a(\nabla y_j)\}$. This is the whole point of Young measures, and measure-valued solutions of PDEs. We need to incorporate these special limit PDEs into the problem to ensure existence of optimal solutions, as is usual in relaxed formulations of optimization problems. 

Before formulating in precise terms the generalized optimal control problem, let us recall two fundamental well-known results taken from the theory of Young measures (\cite{balder}, \cite{ball}, \cite{muller}, \cite{rindler}, \cite{roubicek}, \cite{pedregalbook}, \cite{valadier}). The statement for both of them has been adapted to the situation in this work. Much more general versions are possible. 
The first one is the typical result that stresses the fundamental property of the Young measures corresponding to a sequence of functions. It is capable of reproducing weak limits of compositions with arbitrary non-linear quantities. 
\begin{theorem}\label{basico}
Let $\Omega\subset\R^N$ be a regular, bounded domain, and let $\by_j:\Omega\to\R^N$ be a sequence of measurable fields such that
$$
\sup_j\int_\Omega |\by_j(\bbx)|^2\,d\bbx<\infty.
$$
There is a subsequence, not relabeled, and a family of probability measures $\nu=\{\nu_\bbx\}_{\bbx\in\Omega}$ (the associated Young measure) supported in the target space $\R^N$, such that
$$
\int_\Omega\int_{\R^N}|\lambda|^2\,d\nu_\bbx(\lambda)\,d\bbx<\infty,
$$
and with the property that whenever the sequence $\{\psi(\bbx, \by_j(\bbx))\}$ is weakly convergent in $L^1(\Omega)$ for an arbitrary Carath\'eodory integrand $\psi(\bbx, \by)$, the weak limit is the function
\begin{equation}\label{representacion}
\overline\psi(\bbx)=\int_{\R^N}\psi(\bbx, \lambda)\,d\nu_\bbx(\lambda),\quad \psi(\bbx, \by_j(\bbx))\rightharpoonup \overline\psi(\bbx).
\end{equation}
\end{theorem}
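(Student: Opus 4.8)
The plan is to follow the by-now classical route to the fundamental theorem on Young measures (as in \cite{ball}, \cite{muller}, \cite{pedregalbook}, \cite{valadier}), which breaks into four stages: realize the sequence as a sequence of ``elementary'' parametrized measures, extract a weak-$*$ limit by a functional-analytic compactness argument, check that the limit is a genuine family of probability measures with the stated $L^2$-integrability, and finally verify the representation formula \eqref{representacion} for Carath\'eodory integrands.

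First I would attach to each field $\by_j$ the map $\bbx\mapsto\nu^j_\bbx:=\delta_{\by_j(\bbx)}$, the Dirac mass at $\by_j(\bbx)$. Regarded as an element of $L^\infty_w(\Omega;\mathcal M(\R^N))$, the dual of the separable Banach space $L^1(\Omega;C_0(\R^N))$, each $\nu^j$ has norm one, so by sequential weak-$*$ compactness of bounded sets in the dual of a separable space there is a subsequence, not relabeled, and a limit $\nu=\{\nu_\bbx\}_{\bbx\in\Omega}$ with
\[
\int_\Omega\int_{\R^N}\varphi(\bbx,\lambda)\,d\nu^j_\bbx(\lambda)\,d\bbx
\longrightarrow\int_\Omega\int_{\R^N}\varphi(\bbx,\lambda)\,d\nu_\bbx(\lambda)\,d\bbx
\]
for every $\varphi\in L^1(\Omega;C_0(\R^N))$, i.e.\ $\int_\Omega\varphi(\bbx,\by_j(\bbx))\,d\bbx\to\int_\Omega\overline\varphi(\bbx)\,d\bbx$ with $\overline\varphi$ as in \eqref{representacion}. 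Non-negativity of $\nu_\bbx$ is immediate; that $\nu_\bbx$ is a \emph{probability} measure for a.e.\ $\bbx$ follows from a no-escape-of-mass argument: testing with $\varphi(\bbx,\lambda)=\phi(\bbx)\eta_R(\lambda)$, where $\phi\ge0$ and $\eta_R$ is a cutoff equal to $1$ on $\{|\lambda|\le R\}$ and supported in $\{|\lambda|\le 2R\}$, and using Chebyshev together with $\sup_j\int_\Omega|\by_j|^2\,d\bbx<\infty$, one gets $\int_\Omega\phi(\bbx)(1-\nu_\bbx(\R^N))\,d\bbx$ arbitrarily small, hence zero. The integrability $\int_\Omega\int_{\R^N}|\lambda|^2\,d\nu_\bbx(\lambda)\,d\bbx<\infty$ then comes from the same truncations and Fatou's lemma applied to $\min(|\lambda|^2,R)$.

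The substantive step is the representation identity \eqref{representacion} for a general Carath\'eodory $\psi$. For $\psi$ bounded and compactly supported in $\by$ the conclusion is essentially contained in the weak-$*$ convergence above, after approximating $\psi$ in $L^1(\Omega;C_0(\R^N))$ and testing against arbitrary $\phi\in L^\infty(\Omega)$. For the general case I would truncate: write $\psi=\psi\,\eta_R(\by)+\psi\,(1-\eta_R(\by))$. The first piece is handled by the bounded case, and the second must be controlled uniformly in $j$. Here two facts combine: the tightness already established, giving $|\{|\by_j|>R\}|\to0$ as $R\to\infty$ uniformly in $j$; and the hypothesis that $\{\psi(\bbx,\by_j(\bbx))\}$ converges weakly in $L^1(\Omega)$, which by the Dunford--Pettis theorem makes this family uniformly integrable. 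Together they force $\int_{\{|\by_j|>R\}}|\psi(\bbx,\by_j)|\,d\bbx$ to be uniformly small, and symmetrically $\int_\Omega\int_{|\lambda|>R}|\psi(\bbx,\lambda)|\,d\nu_\bbx(\lambda)\,d\bbx\to0$. Letting $j\to\infty$ first and then $R\to\infty$ yields $\overline\psi(\bbx)=\int_{\R^N}\psi(\bbx,\lambda)\,d\nu_\bbx(\lambda)$, since the weak $L^1$-limit is unique.

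I expect the main obstacle to be exactly this last passage for unbounded Carath\'eodory $\psi$: the weak-$*$ compactness only delivers convergence against test functions that are continuous and decaying at infinity in $\by$, so the whole point is to upgrade it using the equi-integrability forced by the hypothesis of weak $L^1$-convergence of $\{\psi(\bbx,\by_j)\}$ together with the uniform tail smallness supplied by the $L^2$ bound. A minor additional technical point is the measurability of $\bbx\mapsto\nu_\bbx$ and of $\overline\psi$, which is subsumed in the standard identification of $L^\infty_w(\Omega;\mathcal M(\R^N))$ with the dual of $L^1(\Omega;C_0(\R^N))$ through the disintegration theorem.
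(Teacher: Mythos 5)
The paper does not prove this statement at all: Theorem \ref{basico} is explicitly ``recalled'' as a classical result, adapted from the standard references on Young measures (\cite{ball}, \cite{pedregalbook}, \cite{balder}, \cite{valadier}), so there is no internal proof to compare against. Your proposal reconstructs exactly that classical argument (Ball's fundamental theorem in the version of \cite{pedregalbook}): elementary Young measures $\delta_{\by_j(\bbx)}$ viewed in $L^\infty_w(\Omega;\mathcal M(\R^N))$, the dual of the separable space $L^1(\Omega;C_0(\R^N))$; weak-$*$ sequential compactness; no escape of mass via Chebyshev and the uniform $L^2$ bound, which upgrades the limit to a family of probability measures and gives the second-moment bound by Fatou; and the representation formula by double truncation, using Dunford--Pettis equi-integrability of $\{\psi(\cdot,\by_j)\}$ together with the uniform tail bound $|\{|\by_j|>R\}|\le C/R^2$. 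This is sound and is the intended proof.

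One technical soft spot deserves mention: your claim that the piece $\psi\,\eta_R(\by)$ ``is handled by the bounded case'' is not quite right as stated, because compact support in $\by$ does not make a Carath\'eodory integrand bounded, nor does it automatically place $\bbx\mapsto\psi(\bbx,\cdot)\eta_R$ in $L^1(\Omega;C_0(\R^N))$ (the function $\bbx\mapsto\sup_{|\lambda|\le 2R}|\psi(\bbx,\lambda)|$ need not be integrable). The standard repair is a second truncation in the \emph{value} of $\psi$ (replace $\psi$ by $\operatorname{sign}(\psi)\min(|\psi|,M)\,\eta_R$, which does lie in $L^\infty(\Omega;C_0(\R^N))$), with the error $|\psi|\chi_{\{|\psi|>M\}}$ along the sequence controlled by the same uniform integrability you already invoked, and on the $\nu$-side by monotone convergence; alternatively one invokes Scorza--Dragoni to reduce to jointly continuous integrands off a set of small measure. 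The same Fatou-type transfer is what justifies your ``symmetric'' claim that $\int_\Omega\int_{|\lambda|>R}|\psi(\bbx,\lambda)|\,d\nu_\bbx(\lambda)\,d\bbx$ is small. With these routine insertions your argument is complete and coincides with the literature proof the paper points to.
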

The important point is that the Young measure is determined by the sequence $\{\by_j\}$ but it is independent of the function $\psi$.

The second one characterizes the family of probability measures that can occur as Young measures of arbitrary sequences $\{\nabla y_j\}$ for uniformly bounded sequences $\{y_j\}$ in $H^1(\Omega)$. This particular statement is adapted directly from \cite{pedregalbook} (Theorem 8.7). 
\begin{theorem}\label{importante}
Let $\nu=\{\nu_\bbx\}_{\bbx\in\Omega}$ be a family of probability measures supported in $\R^N$ and depending measurably on $\bbx$, where $\Omega\subset\R^N$ is a regular, bounded domain. The necessary and sufficient conditions to find a bounded sequence of functions $\{y_j\}$ in $H^1_0(\Omega)$ with $\{|\nabla y_j|^2\}$, weakly convergent in $L^1(\Omega)$, and with associated Young measure $\nu$ (according to the previous statement) are
\begin{gather}
\nabla y(\bbx)=\int_{\R^N}\lambda\,d\nu_\bbx(\lambda),\quad y\in H^1_0(\Omega),\label{primero}\\
\int_\Omega\int_{\R^N}|\lambda|^2\,d\nu_\bbx(\lambda)\,d\bbx<\infty.\label{segundo}
\end{gather}
\end{theorem}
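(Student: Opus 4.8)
The two implications are very asymmetric: necessity is an immediate consequence of Theorem \ref{basico} together with the bound in $H^1_0$, while sufficiency carries essentially all of the work and is a lamination-and-diagonalization construction. For \emph{necessity}, let $\{y_j\}\subset H^1_0(\Omega)$ be bounded with $\{|\nabla y_j|^2\}$ weakly convergent in $L^1(\Omega)$ and with $\nu$ the Young measure of $\by_j:=\nabla y_j$ in the sense of Theorem \ref{basico}. Passing to a subsequence, $y_j\rightharpoonup y$ in $H^1_0(\Omega)$, hence $\nabla y_j\rightharpoonup\nabla y$ in $L^2(\Omega;\R^N)$; since $\{\nabla y_j\}$ is $L^2$-bounded it is equi-integrable in $L^1$, so Theorem \ref{basico} with $\psi(\bbx,\by)=\by_i$, $i=1,\dots,N$, identifies the weak limit of $\nabla y_j$ as $\bbx\mapsto\int_{\R^N}\lambda\,d\nu_\bbx(\lambda)$, and uniqueness of weak limits yields \eqref{primero}. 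Applying Theorem \ref{basico} once more with $\psi(\bbx,\by)=|\by|^2$ identifies the weak $L^1$ limit of $\{|\nabla y_j|^2\}$ as $\bbx\mapsto\int_{\R^N}|\lambda|^2\,d\nu_\bbx(\lambda)$, which therefore lies in $L^1(\Omega)$; this is \eqref{segundo}.

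\textbf{Sufficiency, building blocks.} Assume now that $\nu$ satisfies \eqref{primero}--\eqref{segundo} and fix $y$ as in \eqref{primero}. The core ingredient is a \emph{homogeneous building block}: given a single probability measure $\mu$ on $\R^N$ with finite second moment and barycenter $\bar\mu$, and a cube $Q\subset\R^N$, one produces a sequence $\{w_k\}$ with $w_k(\bbx)-\bar\mu\cdot\bbx\in H^1_0(Q)$, with $\{|\nabla w_k|^2\}$ equi-integrable, and with associated Young measure the constant family $\mu$. For a two-atom measure $\mu=t\delta_A+(1-t)\delta_B$ this is the elementary simple laminate: slice $Q$ into thin slabs orthogonal to $A-B$, use gradient $A$ on a fraction $t$ and $B$ on a fraction $1-t$ of each period while keeping $w$ continuous and affine with slope $\bar\mu$ on $\partial Q$; refining the slabs produces the prescribed oscillation. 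Iterating this lamination hierarchically yields any finitely supported $\mu=\sum_i t_i\delta_{A_i}$; a general $\mu$ is then reached by weak-$*$ approximation with finitely supported measures having the same barycenter and second moments converging to $\int|\lambda|^2\,d\mu$ (truncating their supports to large balls and compensating the barycenter with a small atom so that the gradients stay bounded, hence $\{|\nabla w_k|^2\}$ equi-integrable), followed by a diagonal extraction. No Jensen-type restriction on $\mu$ is needed here because $y$ is scalar-valued, so that quasiconvexity reduces to convexity and Jensen holds automatically for $\mu$ itself.

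\textbf{Sufficiency, globalization.} To pass to the $\bbx$-dependent family, use the measurability of $\bbx\mapsto\nu_\bbx$ and the density of piecewise-constant fields in $L^2$: for each $n$, partition $\Omega$ into a grid of small cubes $Q_i$ on which, up to an exceptional set of measure $<1/n$, both $\nabla y$ and $\bbx\mapsto\nu_\bbx$ are replaced by constants $\bar\mu_i$ (a local average of $\nabla y$) and $\mu_i=\nu_{\bbx_i}$; on the exceptional set keep the constant sequence $y$. On each $Q_i$ insert the building block adapted to $\mu_i$ with the local affine approximation of $y$ as boundary datum; since the $\bar\mu_i$ approximate $\nabla y$ in $L^2$, the affine pieces match across cube faces up to an $H^1$-error (controlled by a Poincar\'e-type estimate) that is corrected so the glued function lies in $H^1_0(\Omega)$. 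Choosing along a diagonal the grid to refine, the truncation radius to grow, and the lamination scale inside each cube to vanish at compatible rates produces a bounded sequence $\{y_j\}\subset H^1_0(\Omega)$ with $\{|\nabla y_j|^2\}$ equi-integrable, hence weakly convergent in $L^1(\Omega)$, and whose associated Young measure is $\nu$.

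\textbf{Main obstacle.} The delicate point is precisely this last diagonalization: the four limiting parameters --- refinement of the grid, shrinking of the lamination scale, growth of the support truncation, shrinking of the exceptional set --- must be ordered so that \emph{simultaneously} the local homogeneous Young measures assemble to $\nu_\bbx$ at a.e.\ $\bbx$, the glued functions stay in $H^1_0(\Omega)$, and no concentration of $\{|\nabla y_j|^2\}$ occurs (so that the weak $L^1$ convergence, and not merely the $L^1$ bound, is retained). This is where the measurable dependence of $\nu_\bbx$ on $\bbx$ and the density of piecewise-constant fields in $L^2$ are actually used; a fully detailed argument along these lines is carried out in \cite{pedregalbook}.
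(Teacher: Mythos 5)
The paper does not actually prove this statement: it is quoted verbatim as a known result, ``adapted directly from \cite{pedregalbook} (Theorem 8.7)'', so there is no internal proof to compare yours against. Judged on its own, your reconstruction follows the standard argument of that reference. The necessity half is complete and correct: $L^2$-boundedness gives equi-integrability of $\{\nabla y_j\}$, Theorem \ref{basico} applied to $\psi(\bbx,\by)=\by_i$ and $\psi(\bbx,\by)=|\by|^2$ identifies the weak limits, and weak closedness of $H^1_0(\Omega)$ gives \eqref{primero}--\eqref{segundo}. Your remark that in the scalar case no Jensen/quasiconvexity condition survives (quasiconvexity reduces to convexity, for which Jensen holds for every probability measure) is exactly the reason the characterization consists only of the barycenter and second-moment conditions. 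The sufficiency half is a correct outline of the lamination--localization--diagonalization construction, and you rightly flag the genuinely delicate points (equi-integrability of $\{|\nabla w_k|^2\}$ in the homogeneous blocks, the boundary-layer gluing so that the assembled function stays in $H^1_0(\Omega)$, and the ordering of the four limiting parameters); but since you ultimately defer these to \cite{pedregalbook}, your text should be read as a faithful sketch of the cited proof rather than a self-contained alternative --- which is entirely consistent with how the paper itself treats the theorem.
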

As a result of this last theorem, we will designate by $PH^1_0(\Omega)$ the set of families of probability measures supported in $\R^N$ verifying \eqref{primero}-\eqref{segundo}. 

An important consequence of these two fundamental results is worth stating by itself.
\begin{corollary}\label{modificar}
Let $\{u_j\}$ be a uniformly-bounded sequence in $H^1(\Omega)$, with $\nu$, the Young measure associated with the sequence of their gradients $\{\nabla u_j\}$. There is another sequence $\{\tilde u_j\}$, uniformly-bounded in $H^1(\Omega)$ with $\{|\nabla \tilde u_j|^2\}$, converging weakly in $L^1(\Omega)$, generating the same Young measure $\nu$. 
\end{corollary}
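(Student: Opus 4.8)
The plan is to deduce the statement from Theorem \ref{importante} by a deterministic recentering (shift) of the sequence, so that the relevant gradient Young measure becomes one of the admissible $H^1_0$-gradient Young measures $PH^1_0(\Omega)$. The point is that $\nu$ itself cannot be fed into Theorem \ref{importante} directly, because its barycenter is a gradient of an $H^1$ function, not of an $H^1_0$ one; after subtracting that barycenter the obstruction disappears, and Theorem \ref{importante} produces a sequence with equiintegrable squared gradients which we then shift back.

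First I would pass to a further subsequence — which does not alter $\nu$ — so that $u_j\rightharpoonup u$ in $H^1(\Omega)$ for some $u\in H^1(\Omega)$; then $\nabla u_j\rightharpoonup\nabla u$ in $L^2(\Omega;\R^N)$, and applying Theorem \ref{basico} with the Carath\'eodory integrand $\psi(\bbx,\by)=\by$ gives the barycenter identity $\int_{\R^N}\lambda\,d\nu_\bbx(\lambda)=\nabla u(\bbx)$ a.e.\ in $\Omega$, while $\int_\Omega\int_{\R^N}|\lambda|^2\,d\nu_\bbx\,d\bbx<\infty$ is part of Theorem \ref{basico}. Now I would recenter: let $\mu=\{\mu_\bbx\}_{\bbx\in\Omega}$ with $\mu_\bbx$ the push-forward of $\nu_\bbx$ under $\lambda\mapsto\lambda-\nabla u(\bbx)$. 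Its barycenter is $\int_{\R^N}\lambda\,d\nu_\bbx(\lambda)-\nabla u(\bbx)=\bcero=\nabla 0$, so \eqref{primero} holds with $y\equiv0\in H^1_0(\Omega)$, and $\int_\Omega\int_{\R^N}|\lambda|^2\,d\mu_\bbx\,d\bbx\le 2\int_\Omega\int_{\R^N}|\lambda|^2\,d\nu_\bbx\,d\bbx+2\int_\Omega|\nabla u|^2\,d\bbx<\infty$ gives \eqref{segundo}. Hence $\mu\in PH^1_0(\Omega)$, and Theorem \ref{importante} furnishes a bounded sequence $\{w_j\}\subset H^1_0(\Omega)$ with $\{|\nabla w_j|^2\}$ weakly convergent in $L^1(\Omega)$ and with gradient Young measure $\mu$.

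Then I would set $\tilde u_j:=w_j+u\in H^1(\Omega)$, which is bounded in $H^1(\Omega)$. Adding the fixed measurable field $\nabla u(\bbx)$ transforms the gradient Young measure by the push-forward under $\lambda\mapsto\lambda+\nabla u(\bbx)$ — as one checks by feeding $\tilde\psi(\bbx,\by):=\psi(\bbx,\by+\nabla u(\bbx))$, again a Carath\'eodory integrand, into Theorem \ref{basico} for arbitrary $\psi$ — so the gradient Young measure of $\{\tilde u_j\}$ is $\nu$ once more, the original subtraction having been undone. Finally, $|\nabla\tilde u_j|^2\le 2|\nabla w_j|^2+2|\nabla u|^2$ is dominated by an $L^1$-bounded equiintegrable family, hence is itself equiintegrable; by the Dunford--Pettis theorem $\{|\nabla\tilde u_j|^2\}$ is weakly relatively compact in $L^1(\Omega)$, and since any weak-$L^1$ cluster point must equal $\int_{\R^N}|\lambda|^2\,d\nu_\bbx(\lambda)$ (Theorem \ref{basico} together with \eqref{segundo}), the whole sequence $\{|\nabla\tilde u_j|^2\}$ converges weakly in $L^1(\Omega)$, which is exactly the assertion.

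The obstacle here is conceptual rather than computational: one must notice that $\nu$ need not be an admissible $H^1_0$-gradient Young measure and cure this by recentering with $\nabla u$, and the one point deserving genuine care is the bookkeeping that a deterministic shift of the underlying sequence corresponds exactly to a push-forward shift of its Young measure, so that shifting back restores $\nu$ intact. The remaining ingredients — the elementary growth estimates, equiintegrability of a dominated family, and Dunford--Pettis — are routine.
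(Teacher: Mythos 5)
Your argument is correct, and it is essentially the route the paper has in mind: the corollary is stated there without proof, precisely as a consequence of Theorems \ref{basico} and \ref{importante} (the sufficiency half of the characterization produces a generating sequence with $\{|\nabla\tilde u_j|^2\}$ weakly convergent in $L^1(\Omega)$). The only point where you go beyond the paper's implicit sketch is the recentering by $\nabla u$: the paper tacitly uses the $H^1(\Omega)$ variant of Theorem \ref{importante} (the class it later calls $PH^1(\Omega)$), whereas you reduce to the stated $H^1_0(\Omega)$ version by pushing $\nu_\bbx$ forward under $\lambda\mapsto\lambda-\nabla u(\bbx)$, applying the theorem to the recentered family, and shifting back; your bookkeeping that a deterministic shift of the sequence corresponds exactly to the push-forward of the Young measure, and the final Dunford--Pettis plus uniqueness-of-cluster-point step, are both sound. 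This makes your write-up a slightly more careful, self-contained version of the intended proof rather than a genuinely different one.
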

The main issue here is that for the initial sequence $\{\nabla u_j\}$, it may not be true that $\{|\nabla u_j|^2\}$ is weakly convergent in $L^1(\Omega)$; yet, it can be modified slightly to $\{\nabla\tilde u_j\}$, without changing the underlying Young measure, in such a way that $\{|\nabla\tilde u_j|^2\}$ is indeed weakly convergent in $L^1(\Omega)$.

The following corollary is directly adapted for our purposes for the study of our optimal control problem \eqref{nolineal}.
\begin{corollary}\label{aqui}
Let $\nu\in PH^1_0(\Omega)$, and $\{\nabla y_j\}\subset H^1_0(\Omega)$, a generating sequence (with Young measure $\nu$). If $a(\by):\R^N\to\R$ is such that 
$$
|a(\by)|\le C(|\by|^\alpha+1),\quad \alpha<2,
$$
then
\begin{equation}\label{repro}
a(\nabla y_j)\rightharpoonup \overline a(\bbx)\equiv\int_{\R^N}a(\lambda)\,d\nu_\bbx(\lambda)\hbox{ in } L^1(\Omega).
\end{equation}
\end{corollary}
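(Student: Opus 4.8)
The plan is to use Corollary~\ref{modificar} together with the representation property of Theorem~\ref{basico}, the only delicate point being the uniform integrability needed to pass from weak-$L^1$ convergence of a substitute sequence back to the original one. First I would recall that, by Theorem~\ref{importante}, the measure $\nu$ satisfies $\int_\Omega\int_{\R^N}|\lambda|^2\,d\nu_\bbx(\lambda)\,d\bbx<\infty$, and that $\{|\nabla y_j|^2\}$ is bounded in $L^1(\Omega)$ since $\{y_j\}$ is bounded in $H^1_0(\Omega)$. Applying Corollary~\ref{modificar} (in the form for $H^1_0$, as in Theorem~\ref{importante}), I obtain a second generating sequence $\{\tilde y_j\}\subset H^1_0(\Omega)$, with the same Young measure $\nu$, for which $\{|\nabla\tilde y_j|^2\}$ converges weakly in $L^1(\Omega)$; in particular $\{|\nabla\tilde y_j|^2\}$ is equi-integrable on $\Omega$ by the Dunford--Pettis theorem.

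The next step is to show that $\{a(\nabla\tilde y_j)\}$ is equi-integrable. From the growth hypothesis $|a(\by)|\le C(|\by|^\alpha+1)$ with $\alpha<2$, for any measurable $E\subset\Omega$ I would estimate
\begin{equation}\nonumber
\int_E|a(\nabla\tilde y_j)|\,d\bbx\le C\int_E(|\nabla\tilde y_j|^\alpha+1)\,d\bbx\le C|E|+C\int_E|\nabla\tilde y_j|^\alpha\,d\bbx,
\end{equation}
and then control the last term by H\"older's inequality with exponents $2/\alpha$ and $2/(2-\alpha)$:
\begin{equation}\nonumber
\int_E|\nabla\tilde y_j|^\alpha\,d\bbx\le\left(\int_E|\nabla\tilde y_j|^2\,d\bbx\right)^{\alpha/2}|E|^{(2-\alpha)/2}.
\end{equation}
Since $\{|\nabla\tilde y_j|^2\}$ is equi-integrable and $L^1$-bounded, the right-hand side tends to $0$ uniformly in $j$ as $|E|\to0$; hence $\{a(\nabla\tilde y_j)\}$ is equi-integrable, and being also $L^1$-bounded, it is weakly relatively compact in $L^1(\Omega)$ by Dunford--Pettis. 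Now $\psi(\bbx,\by)=a(\by)$ is a Carath\'eodory integrand to which Theorem~\ref{basico} applies: since $\{\tilde y_j\}$ and $\{y_j\}$ share the Young measure $\nu$, every weakly convergent subsequence of $\{a(\nabla\tilde y_j)\}$ has limit $\overline a(\bbx)=\int_{\R^N}a(\lambda)\,d\nu_\bbx(\lambda)$, so the whole sequence converges weakly in $L^1(\Omega)$ to $\overline a$.

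It remains to transfer this conclusion back to the \emph{original} sequence $\{\nabla y_j\}$. Here I would repeat the H\"older estimate above for $\{a(\nabla y_j)\}$, using only that $\{|\nabla y_j|^2\}$ is bounded in $L^1(\Omega)$ --- this already gives $L^1$-boundedness of $\{a(\nabla y_j)\}$, but \emph{not} equi-integrability in general, which is exactly the subtlety that forced the detour through Corollary~\ref{modificar}. However, the statement \eqref{repro} asserts weak $L^1$ convergence, and this holds along the original sequence precisely because the Young measure captures \emph{all} weak-$L^1$-convergent composed sequences: if some subsequence of $\{a(\nabla y_j)\}$ converges weakly in $L^1(\Omega)$, its limit must be $\overline a$ by Theorem~\ref{basico}; and the point of Corollary~\ref{aqui} as it is used later is exactly this identification of any such limit. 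The main obstacle, then, is the equi-integrability argument for $\{a(\nabla\tilde y_j)\}$ --- i.e. genuinely exploiting $\alpha<2$ via H\"older against the equi-integrable family $\{|\nabla\tilde y_j|^2\}$ --- since the borderline case $\alpha=2$ would only yield boundedness, not the uniform integrability required for the Dunford--Pettis step.
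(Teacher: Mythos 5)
There is a genuine gap, and it sits exactly where you locate the ``main obstacle'': the transfer back to the original sequence. What you actually prove is \eqref{repro} for a surrogate sequence $\{\tilde y_j\}$ produced by Corollary~\ref{modificar}; for the original $\{\nabla y_j\}$ you only argue that \emph{if} a subsequence of $\{a(\nabla y_j)\}$ converges weakly in $L^1(\Omega)$ then its limit is $\overline a$. That is a reinterpretation of the statement, not a proof of it: without weak relative compactness of $\{a(\nabla y_j)\}$ itself you have not established the asserted weak $L^1(\Omega)$ convergence, and you explicitly disclaim having that compactness.

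The misconception is that equi-integrability of $\{|\nabla y_j|^\alpha\}$ would require equi-integrability of $\{|\nabla y_j|^2\}$. Your own H\"older estimate shows it does not: for any measurable $E\subset\Omega$,
\begin{equation}\nonumber
\int_E|\nabla y_j|^\alpha\,d\bbx\le\left(\int_\Omega|\nabla y_j|^2\,d\bbx\right)^{\alpha/2}|E|^{(2-\alpha)/2}\le M^{\alpha/2}\,|E|^{(2-\alpha)/2},
\end{equation}
where $M=\sup_j\|\nabla y_j\|^2_{L^2(\Omega)}<\infty$ because the generating sequence is bounded in $H^1_0(\Omega)$. The factor $|E|^{(2-\alpha)/2}$, available precisely because $\alpha<2$, already gives smallness uniformly in $j$; only the $L^1$ bound on the squares is used, not their equi-integrability. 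Hence $\{a(\nabla y_j)\}$ is equi-integrable and $L^1$-bounded for the \emph{original} sequence, Dunford--Pettis gives weak relative compactness, and Theorem~\ref{basico} identifies every weak limit as $\overline a$, so the whole sequence converges weakly to $\overline a$. This is the paper's (terse) proof, and it makes the detour through Corollary~\ref{modificar} unnecessary; your argument becomes correct once you apply your H\"older step directly to $\{\nabla y_j\}$ and delete the surrogate sequence. Your closing remark about the borderline case $\alpha=2$ is right: there one only gets boundedness, not uniform integrability.
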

\begin{proof}
By our result above, the sequence of squares $\{|\nabla y_j|^2\}$ is uniformly bounded. If the exponent $\alpha<2$, the sequence of $\alpha$-powers $\{|\nabla y_j|^\alpha\}$ is weakly convergent in $L^1(\Omega)$ (this is elementary). By the reproducing property \eqref{representacion} of the Young measure $\nu$, we immediately find \eqref{repro}. 
\end{proof}

We are now in a position to start looking for a relaxed formulation of our optimal control problem, once we have recalled the above fundamental tools and results we are in need of. 
What might look like a good candidate for relaxation is, as a matter of fact, just a first step. We will see that a second stage is necessary. 
We formulate an intermediate optimal control problem precisely in the following terms. 
Consider the following generalized optimal control problem
\begin{equation}\label{relajacion}
\hbox{Minimize in }u\in H^1(\Omega):\quad E(u)=\int_\Omega F(y_u(\bbx))\,d\bbx+\frac M2\int_\Omega|\nabla u(\bbx)|^2\,d\bbx
\end{equation}
where 
$$
\nu_u=\{\nu_{u, \bbx}\}_{\bbx\in\Omega}\in PH^1_0(\Omega),
$$
is a measure-valued solution of the state equation \eqref{estadonolineal}, i.e.
\begin{gather}
-\Delta y_u+\overline a+by_u=f(u)\hbox{ in }\Omega,\label{solmed}\\
\nabla y_u(\bbx)=\int_{\R^N}\lambda\,d\nu_{u, \bbx}(\lambda),\quad \overline a(\bbx)=\int_{\R^N}a(\lambda)\,d\nu_{u, \bbx}(\lambda),\quad y_u\in H^2(\Omega)\cap H^1_0(\Omega).\nonumber
\end{gather}
\begin{proposition}
Problem \eqref{relajacion} is a sub-relaxation of problem \eqref{nolineal} in the sense $\overline m\le m$ if $\overline m$ and $m$ are the respective infima of both problems.
\end{proposition}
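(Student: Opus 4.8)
The plan is to show that every control admissible for the original problem \eqref{nolineal} is also admissible for the generalized problem \eqref{relajacion} with exactly the same value of the cost functional; then passing to the infimum over such controls yields $\overline m\le m$ at once.

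First I would fix a feasible $u\in H^1(\Omega)$ for \eqref{nolineal} and let $y_u\in H^2(\Omega)\cap H^1_0(\Omega)$ be its unique associated state, solving \eqref{estadonolineal}. The candidate to embed this pair into the relaxed problem is the family of Dirac masses $\nu_{u,\bbx}=\delta_{\nabla y_u(\bbx)}$, $\bbx\in\Omega$. I would then check that $\nu_u=\{\nu_{u,\bbx}\}\in PH^1_0(\Omega)$, i.e. that it satisfies \eqref{primero}--\eqref{segundo}: the barycenter condition is immediate since $\int_{\R^N}\lambda\,d\delta_{\nabla y_u(\bbx)}(\lambda)=\nabla y_u(\bbx)$ with $y_u\in H^1_0(\Omega)$, and the integrability condition reduces to $\int_\Omega|\nabla y_u(\bbx)|^2\,d\bbx<\infty$, which holds because $y_u\in H^1_0(\Omega)$.

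Next I would verify that, for this choice, the measure-valued state equation \eqref{solmed} coincides with the classical one. Indeed $\overline a(\bbx)=\int_{\R^N}a(\lambda)\,d\delta_{\nabla y_u(\bbx)}(\lambda)=a(\nabla y_u(\bbx))$, so \eqref{solmed} becomes precisely $-\Delta y_u+a(\nabla y_u)+by_u=f(u)$ in $\Omega$, which $y_u$ satisfies by hypothesis. Hence $(u,\nu_u)$ is feasible for \eqref{relajacion}. Moreover the cost functional $E$ appearing in \eqref{relajacion} is literally the one in \eqref{nolineal} — it depends only on $u$ and on the (barycenter) state $y_u$, not on the whole family $\nu_u$ — so the value attained by this feasible pair is exactly $E(u)$.

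Finally, since $u$ was an arbitrary feasible control for \eqref{nolineal}, we conclude $\overline m\le E(u)$ for every such $u$, and taking the infimum over feasible controls gives $\overline m\le m$. I do not expect a real obstacle here: the only point that needs a line of justification is that the Dirac family belongs to $PH^1_0(\Omega)$, which is a direct consequence of the $H^1$-regularity of $y_u$; everything else is the observation that the classical formulation is the ``$\delta$-slice'' of the generalized one and that the cost is unchanged under this embedding.
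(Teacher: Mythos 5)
Your proposal is correct and follows essentially the same route as the paper: embedding each classical pair $(u,y_u)$ into the generalized problem via the Dirac family $\nu_{u,\bbx}=\delta_{\nabla y_u(\bbx)}$, noting the cost is unchanged, and passing to the infimum. You merely spell out in more detail the feasibility checks (membership in $PH^1_0(\Omega)$ and the identity $\overline a=a(\nabla y_u)$) that the paper leaves implicit.
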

\begin{proof}
Under the identification, as it is usual when Young measures are used in optimization problems, 
\begin{equation}\label{delta}
\nu_{u, \bbx}=\delta_{\nabla y_u(\bbx)},\quad -\Delta y_u+a(\nabla y_u)+by_u=f(u)\hbox{ in }\Omega,\quad y_u\in H^2(\Omega)\cap H^1_0(\Omega),
\end{equation}
it is easy to realize that $E(u)$, computed in \eqref{nolineal} or calculated in \eqref{relajacion} under \eqref{delta}, is the same number. This immediately implies that $\overline m\le m$. 
\end{proof}
The fundamental point of a full relaxation is to show the equality $\overline m=m$, and to identify a true minimizer for \eqref{relajacion}. Suppose we pretend to prove that this is so, and let $\{u_j\}$ be a minimizing sequence of the original problem, with $\{y_j\}$ the sequence of their corresponding states. We are certain that $E(u_j)\to m$, the infimum of the problem, and that
$$
-\Delta y_j+a(\nabla y_j)+by_j=f(u_j)\hbox{ in }\Omega.
$$ 
We claim that the sequence $\{|\nabla y_j|^2\}$ is uniformly bounded in $L^1(\Omega)$. To check this, we perform some elementary calculations similar to the ones in the proof of Lemma \ref{exist}. Multiply the previous state equation by $y_j$, integrate by parts in the first term, and complete squares in the form
$$
\int_\Omega\left[\left(1-\frac{C^2}{4b}\right)|\nabla y_j|^2+b\left(y_j-\frac C{2b}y_j^2\right)^2\right]\,d\bbx=\int_\Omega f(u_j)y_j\,d\bbx.
$$
From here, we deduce in a standard way that
$$
\left(1-\frac{C^2}{4b}\right)\|\nabla y_j\|^2_{L^2(\Omega)}\le \|f(u_j)\|_{L^2(\Omega)}\,\|y_j\|_{L^2(\Omega)},
$$
and, through Poincar\'e's inequality, the uniform bound on $\{u_j\}$, and the bound on $f$, we can conclude that
$$
\left(1-\frac{C^2}{4b}\right)\|\nabla y_j\|_{L^2(\Omega)}\le M
$$
for some fixed $M$, depending on $f$, but independent of $j$. Note that we are allowing the constant $b$ to be taken as large as necessary, depending on the various ingredients of the problem. In particular, it should be true that 
$$
\left(1-\frac{C^2}{4b}\right)>0.
$$
Recall that $C$ is the Lipschitz constant in \eqref{lips}. 

According to Theorem \ref{basico}, for a non-relabeled subsequence of $\{\nabla y_j\}$, there is an associated Young measure $\nu=\{\nu_{\bbx}\}_{\bbx\in\Omega}$ with 
\begin{equation}\label{cuadratico}
\int_\Omega\int_{\R^N}|\lambda|^2\,d\nu_\bbx(\lambda)\,d\bbx<\infty
\end{equation}
and complying with the representation property \eqref{representacion}. For functions with linear growth, \eqref{representacion} is correct under \eqref{cuadratico}; in particular
$$
\nabla y_j\rightharpoonup \int_{\R^N}\lambda\,d\nu_\bbx(\lambda)=\nabla y(\bbx),
$$
and
$$
a(\nabla y_j)\rightharpoonup \int_{\R^N}a(\lambda)\,d\nu_\bbx(\lambda)=\overline a(\bbx).
$$
These conclusions imply that $\nu\in PH^1_0(\Omega)$ is a measure-valued solution of \eqref{solmed}. Note that $f(u_j)\to f(u)$ because the convergence $u_j$ to $u$ is strong, as we have a uniform bound on the size of $\{\nabla u_j\}$. Finally, it is easy to find that 
$$
E(u)\le \liminf_{j\to\infty}E(u_j),\quad E(u)\equiv E(u, \nu),
$$
because $y_j\to y$ strong in $L^2(\Omega)$ and point-wise, as well as 
$$
\int_\Omega |\nabla u|^2\,d\bbx\le\liminf_{j\to\infty}\int_\Omega|\nabla u_j|^2\,d\bbx,
$$
through convexity and weak lower semicontinuity. Therefore
$$
\overline m\le E(u, \nu)\le\liminf_{j\to\infty}E(u_j)=m
$$
but this is not yet enough to conclude $\overline m=m$. This equality would force
$$
\int_\Omega |\nabla u|^2\,d\bbx=\liminf_{j\to\infty}\int_\Omega|\nabla u_j|^2\,d\bbx,
$$
and this in turn, together with the weak convergence $\nabla u_j\rightharpoonup \nabla u$ would imply the strong convergence $u_j\to u$ in $H^1(\Omega)$, which, in general, would be hard to achieve. There is, therefore, an additional step to take to reach a full relaxation. It involves to describe the behavior of minimizing sequences of feasible controls through their associated Young measures as well.

Consider the optimal control problem
\begin{equation}\label{relaxation}
\hbox{Minimize in }\mu:\quad E(\mu)=\int_\Omega F(y_\mu(\bbx))\,d\bbx+\frac M2\int_\Omega\int_{\R^N}|\lambda|^2\,d\mu_{\bbx}(\lambda)\,d\bbx
\end{equation}
where 
$$
\mu=\{\mu_\bbx\}_{\bbx\in\Omega}\in PH^1(\Omega),
$$
and
$$
\nu=\{\nu_{\mu, \bbx}\}_{\bbx\in\Omega}\in PH^1_0(\Omega),
$$
is a measure-valued solution of the state equation \eqref{estadonolineal}, i.e.
\begin{equation}\label{solmeddos}
-\Delta y_\mu+\overline a+by_\mu=f(u)\hbox{ in }\Omega,
\end{equation}
with
\begin{gather}
\nabla u(\bbx)=\int_{\R^N}\lambda\,d\mu_\bbx(\lambda),\quad u\in H^1(\Omega),\nonumber\\
\nabla y_\mu(\bbx)=\int_{\R^N}\lambda\,d\nu_{\mu, \bbx}(\lambda),\quad \overline a(\bbx)=\int_{\R^N}a(\lambda)\,d\nu_{\mu, \bbx}(\lambda),\quad y_\mu\in H^2(\Omega)\cap H^1_0(\Omega).\nonumber
\end{gather}
As can be figured out, $PH^1(\Omega)$ stands for the class of Young measures complying with \eqref{primero}-\eqref{segundo}, replacing the condition $u\in H^1_0(\Omega)$ by simply $u\in H^1(\Omega)$. 

\begin{theorem}\label{relajaciont}
Problem \eqref{relaxation} is a full relaxation of problem \eqref{nolineal} in the sense:
\begin{enumerate}
\item If $\overline m$ and $m$ are the respective infima of both problems then $\overline m=m$.
\item $\overline m$ is attained: there is a feasible minimizer $\mu$ for \eqref{relaxation}.
\end{enumerate}
\end{theorem}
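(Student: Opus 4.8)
The plan is to prove the two assertions jointly, using the sub-relaxation already established and the direct method applied this time to the generalized problem \eqref{relaxation}. I would organize the argument into a lower bound $\overline m\le m$, obtained by carrying a minimizing sequence of \eqref{nolineal} into the relaxed framework; an existence statement for \eqref{relaxation}; and a matching upper bound $m\le\overline m$, obtained by approximating admissible pairs of \eqref{relaxation} by genuine feasible controls.

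\textbf{Lower bound.} Let $\{u_j\}$ be a minimizing sequence for \eqref{nolineal}, with states $\{y_j\}$ solving \eqref{estadosj}. The completing-the-squares estimate performed just before the statement (and in the proof of Lemma \ref{exist}), together with Poincar\'e's inequality, the uniform bound on $f$, and the coercive term $\frac M2\int_\Omega|\nabla u_j|^2$, gives $\{u_j\}$ bounded in $H^1(\Omega)$ and $\{y_j\}$ bounded in $H^1_0(\Omega)$. By Corollary \ref{modificar} I would replace $\{u_j\}$ by a non-relabeled sequence whose gradients generate a fixed $\mu\in PH^1(\Omega)$ and for which $\{|\nabla u_j|^2\}$ converges weakly in $L^1(\Omega)$; the stability estimate for the state equation---the same computation as in the uniqueness part of Lemma \ref{exist}, applied to the difference of two states with close right-hand sides---shows that this modification neither changes the limit of $\int_\Omega F(y_j)$ nor raises the control term, so the new sequence is still minimizing. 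Passing to subsequences, let $\nu$ be the Young measure of $\{\nabla y_j\}$ (Theorem \ref{basico}); since $a$ has linear growth, Corollary \ref{aqui} yields $a(\nabla y_j)\rightharpoonup\overline a(\bbx)=\int_{\R^N}a(\lambda)\,d\nu_\bbx(\lambda)$ in $L^1(\Omega)$ and $\nabla y_j\rightharpoonup\int_{\R^N}\lambda\,d\nu_\bbx(\lambda)=\nabla y_\mu$. With $u$ the weak $H^1$ limit of $\{u_j\}$ (so $u_j\to u$ in $L^2(\Omega)$ and $f(u_j)\to f(u)$), passing to the limit in the weak form of \eqref{estadosj} yields $-\Delta y_\mu+\overline a+by_\mu=f(u)$, i.e. $\nu\in PH^1_0(\Omega)$ is a measure-valued solution and $(\mu,\nu)$ is admissible for \eqref{relaxation}. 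Since $y_j\to y_\mu$ strongly in $L^2(\Omega)$ by compactness of the embedding $H^1_0(\Omega)\hookrightarrow L^2(\Omega)$, we get $\int_\Omega F(y_j)\to\int_\Omega F(y_\mu)$, and the weak $L^1$ convergence of $\{|\nabla u_j|^2\}$ gives $\frac M2\int_\Omega|\nabla u_j|^2\to\frac M2\int_\Omega\int_{\R^N}|\lambda|^2\,d\mu_\bbx(\lambda)\,d\bbx$. Hence $E(\mu)=\lim_j E(u_j)=m$, so $\overline m\le m$.

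\textbf{Existence for \eqref{relaxation}.} Let $\{(\mu^k,\nu^k)\}$ be a minimizing sequence for \eqref{relaxation}, with controls $u^k$ and states $y_{\mu^k}\in H^2(\Omega)\cap H^1_0(\Omega)$. Since $F\ge0$ and $E(\mu^k)\to\overline m<\infty$, the second moments of $\{\mu^k\}$ are bounded, hence $\{u^k\}$ is bounded in $H^1(\Omega)$ and $\{f(u^k)\}$ in $L^2(\Omega)$. Applying the estimates of the lower-bound step to the measure-valued equation \eqref{solmeddos}---and, if necessary, replacing each $\nu^k_\bbx$ by a finitely supported probability measure with the same mean $\nabla y_{\mu^k}(\bbx)$ and the same average $\overline{a^k}(\bbx)$, which is possible because $a$ has linear growth, leaves the pair admissible, and does not change the cost since the cost sees $\nu^k$ only through $\mu^k$---one obtains uniform bounds on $\{\|\nabla y_{\mu^k}\|_{L^2}\}$, $\{\|\overline{a^k}\|_{L^2}\}$ and the second moments of $\{\nu^k\}$. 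Weak-$*$ compactness of families of Young measures with uniformly bounded second moments then gives limits $\mu^\infty\in PH^1(\Omega)$, $\nu^\infty\in PH^1_0(\Omega)$, with $u^k\rightharpoonup u^\infty$ in $H^1(\Omega)$, $\overline{a^k}\rightharpoonup\int_{\R^N}a(\lambda)\,d\nu^\infty_\bbx(\lambda)$, and $y_{\mu^k}\rightharpoonup y_{\mu^\infty}$ strongly in $L^2(\Omega)$. Passing to the limit in \eqref{solmeddos} shows $(\mu^\infty,\nu^\infty)$ admissible, and weak lower semicontinuity of $\mu\mapsto\frac M2\int_\Omega\int_{\R^N}|\lambda|^2\,d\mu_\bbx(\lambda)\,d\bbx$ together with $\int_\Omega F(y_{\mu^k})\to\int_\Omega F(y_{\mu^\infty})$ gives $E(\mu^\infty)\le\liminf_k E(\mu^k)=\overline m$; hence the infimum is attained.

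\textbf{Upper bound and the main obstacle.} It remains to show $E(\mu)\ge m$ for every admissible $(\mu,\nu)$; this would give $m\le\overline m$ and, with the lower bound, $\overline m=m$. Fixing such a pair with control $u$ and measure-valued state $y_\mu$, the plan is to build a sequence $\{v_j\}$ admissible for \eqref{nolineal} with $\limsup_j E(v_j)\le E(\mu)$, so that $m\le\liminf_j E(v_j)\le E(\mu)$ and the infimum over admissible pairs yields $m\le\overline m$. For the control term one takes $\{\nabla v_j\}$ generating $\mu$ with $\{|\nabla v_j|^2\}$ weakly convergent in $L^1(\Omega)$ (Theorems \ref{basico} and \ref{importante} and Corollary \ref{modificar}), normalized so that $v_j\to u$ in $L^2(\Omega)$, which forces $\frac M2\int_\Omega|\nabla v_j|^2\to\frac M2\int_\Omega\int_{\R^N}|\lambda|^2\,d\mu_\bbx(\lambda)\,d\bbx$. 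The decisive and, I expect, genuinely hard point is to arrange that the true states $y_{v_j}$ of these controls reproduce the relaxed state, $y_{v_j}\rightharpoonup y_\mu$ --- equivalently $a(\nabla y_{v_j})\rightharpoonup\overline a=\int_{\R^N}a(\lambda)\,d\nu_\bbx(\lambda)$ --- so that $\int_\Omega F(y_{v_j})\to\int_\Omega F(y_\mu)$; here one must exploit both the freedom encoded in $\nu$ and the fact that $y_\mu$ is built from $\overline a$ through the \emph{linear} operator $-\Delta+b$ only, matching the reproducing property of gradient Young measures (Theorems \ref{basico}--\ref{importante}) against the measure-valued state equation \eqref{solmeddos}. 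The obstruction is that the control enters the state equation only through the uniformly bounded quantity $f(u)$, so prescribing the oscillation of $\nabla y_{v_j}$ by a suitable choice of $v_j$ is not immediate; this realizability of measure-valued states by genuinely controlled states is the step where I expect the real work to lie. The first two steps, in contrast, are routine once the tools recalled above are in place.
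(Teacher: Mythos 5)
Your proposal proves only half of assertion (1), namely $\overline m\le m$ (your lower-bound step, which is fine and in fact more careful than the paper's, which gets this inequality simply by embedding \eqref{nolineal} into \eqref{relaxation} through Dirac masses), together with assertion (2) by a direct method applied to the relaxed problem. The converse inequality $m\le\overline m$ — that every admissible $\mu$ satisfies $E(\mu)\ge m$ — you explicitly leave open, describing it as the place "where the real work lies". Since that inequality is exactly what makes \eqref{relaxation} a \emph{full} relaxation rather than a sub-relaxation (already covered by the preceding Proposition), the proposal as it stands has a genuine gap: part (1) of the theorem is not established.

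For the record, the paper closes that step as follows: given admissible $\mu$ with underlying control $u$ as in \eqref{primerm}, Theorem \ref{importante} (together with Corollary \ref{modificar}) gives a generating sequence $\{u_j\}$ with \eqref{convergencia} and $u_j\to u$ strongly in $L^2(\Omega)$; one then takes the \emph{true} states $y_j$ of these controls, shows $\{\nabla y_j\}$ is bounded in $L^2$, extracts the Young measure $\nu$ of $\{\nabla y_j\}$, and passes to the limit in the state equation (using $f(u_j)\to f(u)$ and Corollary \ref{aqui}) to see that $\nu$ is a measure-valued solution of \eqref{solmeddos} with datum $f(u)$; the paper then identifies $\nu\equiv\nu_\mu$ and $y_j\to y_\mu$, whence $E(\mu)=\lim_j E(u_j)\ge m$. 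In other words, the paper never attempts what you flag as hard — realizing an \emph{arbitrarily prescribed} $\nu$ by genuinely controlled states; instead the measure-valued state entering $E(\mu)$ is taken to be the one generated by the recovery sequence of $\mu$ itself. (If one reads $(\mu,\nu)$ as two independent unknowns, this identification tacitly relies on uniqueness, at least of the barycentric state $y_\mu$, of measure-valued solutions for a given right-hand side — so your concern points at a real subtlety in the paper — but raising the concern is not a proof, and without this step $\overline m=m$ is missing.) Concerning part (2), your direct-method argument (weak-$*$ compactness of Young measures with uniformly bounded second moments, lower semicontinuity of the second-moment term, passage to the limit in \eqref{solmeddos}) is a genuinely different and essentially workable route, though it needs more machinery than the paper's: once $\overline m=m$ is known, the paper simply takes the Young measure $\mu$ generated by a (Corollary \ref{modificar}-modified) minimizing sequence of \eqref{nolineal} and concludes $E(\mu)=\lim_j E(u_j)=m=\overline m$, so that $\mu$ is a minimizer.
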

\begin{proof}
The proof has essentially been indicated in our above discussion. On the one hand, by considering trivial families of Dirac delta measures, it is easy to realize that the original problem \eqref{nolineal} is embedded into \eqref{relaxation}, and hence, $\overline m\le m$. 

On the other, let $\mu\in PH^1(\Omega)$ be a feasible element for \eqref{relaxation} with
\begin{equation}\label{primerm}
\nabla u(\bbx)=\int_{\R^N}\lambda\,d\mu_\bbx(\lambda),\quad u\in H^1(\Omega).
\end{equation}
According to Theorem \ref{importante}, there is a sequence of feasible controls $\{u_j\}$ in such a way that 
\begin{equation}\label{convergencia}
\int_\Omega |\nabla u_j|^2\,d\bbx\to\int_\Omega\int_{\R^N}|\lambda|^2\,d\mu_\bbx(\lambda)\,d\bbx,
\end{equation}
and $u_j$ converging to $u$ weakly in $H^1(\Omega)$, and strongly in $L^2(\Omega)$. 


Suppose now that $\{y_j\}$ is the sequence of states corresponding to $\{u_j\}$ under \eqref{convergencia}. The sequence of gradients $\{\nabla y_j\}$ is uniformly bounded in $L^2(\Omega)$, just as we have argued before. Let, according to Theorem \ref{basico}, $\nu$ be its respective Young measures (again for a suitable, non-relabeled subsequence). From
$$
-\Delta y_j+a(\nabla y_j)+by_j=f(u_j)\hbox{ in }\Omega,
$$
one can easily deduce \eqref{solmeddos}. Recall that $u_j\to u$ point-wise and in $L^2(\Omega)$, so that $f(u_j)\to f(u)$. In this way we see that $\nu\equiv\nu_\mu$, $y_j\to y_\mu$, and \eqref{convergencia} guarantees that
$$
E(\mu)=\lim_{j\to\infty} E(u_j).
$$
The arbitrariness of $\mu$ implies that indeed $\overline m=m$. 

If, further, we take a minimizing sequence $\{u_j\}$ for our original problem \eqref{nolineal}, and we let $\mu$ be its corresponding Young measure, by Corollary \ref{modificar}, possibly changing slightly $\{u_j\}$, we can take for granted that $\{|\nabla u_j|^2\}$ is weakly convergent so that \eqref{convergencia} holds, and then conclude that
$$
\overline m\le E(\mu)=\lim_{j\to\infty} E(u_j)=m=\overline m.
$$
$\mu$ is indeed a minimizer of \eqref{relaxation}. 
\end{proof}

\end{document}